\newlength{\myl}
\let\origequation=\equation
\let\origendequation=\endequation
\resizebox{\linewidth}{!}{\ensuremath{\BODY}}}  
\newcommand{\N}{\mathbb{N}}     
\newcommand{\R}{\mathbb{R}}
\newcommand{\Z}{\mathbb{Z}}
\newcommand{\Q}{\mathbb{Q}}
\newcommand{\C}{\mathbb{C}}
\newcommand{\norm}[1]{\left\lVert#1\right\rVert}
\numberwithin{equation}{section}
\theoremstyle{plain}
\newtheorem{theorem}{Theorem}
\newtheorem{lemma}{Lemma}
\newtheorem{proposition}{Proposition}
\theoremstyle{definition}
\newtheorem{definition}{Definition}
\newtheorem{question}{Question}
\theoremstyle{remark}
\title{Fourier Dimension and Translation Invariant Linear Equations}
\author{Angel D. Cruz}
\date{\today}
\pgfplotsset{compat=1.18}
\begin{document}

\begin{abstract}
We consider a translation invariant linear equation in four variables with integer coefficients of the form: $ax_1 +bx_2= cy_1+dy_2$. 
The main result of the paper states that any set on the real line with Fourier dimension greater than $1/2$ must contain a nontrivial solution of such an equation. 
\end{abstract}

\maketitle

\section{Introduction}

We investigate the following type of equations:
\begin{equation}\label{4variable equation}
ax_1 + bx_2 = cy_1+dy_2 \text{ where } \{a,b,c,d\}\subset \N \text{ obeying } a+b=c+d  
\end{equation}
and $\{x_1,x_2,y_1,y_2\}\subset \R$. 
Such equations are examples of \textbf{translation invariant linear equations} with integer coefficients.
Translation invariant because any $\{x_1,x_2,y_1,y_2\}$ that satisfies \eqref{4variable equation} and any $t\in \R$ the collection $\{x_1+t,x_2+t,y_1+t,y_2+t\}$ remains a solution to \eqref{4variable equation} by the condition $a+b=c+d$. 
These equations are widely studied in many areas of mathematics including harmonic analysis, combinatorics and number theory; see \cite{ruzsa1}, \cite{MR4277805}, \cite{MR4307997}, \cite{MR2519934}, \cite{MR2823971} and \cite{MR2153903}.
When $a=b=c=d$, \eqref{4variable equation} reduces to $x_1+x_2=y_1+y_2$, an equation central to the study of Sidon sets.
Another special case occurs when $y_1=y_2=y$ but $x_1\not=x_2$, in this case \eqref{4variable equation} reduces to $x_1+x_2=2y$ and $\{x_1,x_2,y\}$ lie in an arithmetic progression. 
This type of equation is also a translation invariant linear equation and is studied in \cite{LiangandPramanik} with respect to the \textit{size} of a set that must contain a \textit{nontrivial solution} to \eqref{4variable equation}.

In our current work, we consider the following question
\begin{multline}\label{question main}
\text{How ``large" must a set $E\subset [0,1]$ be to ensure it contains} \\
\text{a ``nontrivial" solution to an equation of the form \eqref{4variable equation}?}
\end{multline}
The words ``large" and ``nontrivial" are open to interpretation and we start with the latter.
For a given $\{a,b,c,d\}$ obeying $a+b=c+d$ and $E\subset [0,1]$ we define the set of solutions $\mathcal{S}$ of \eqref{4variable equation} in $E$ as follows
\[
\mathcal{S}:=\mathcal{S}(E,\{a,b,c,d\}) = \{(x_1,x_2,y_1,y_2) \in E^4 :ax_1 + bx_2 = cy_1+dy_2 \}.
\]
This set is always nonempty because $x_1=x_2=y_1=y_2$ always satisfies \eqref{4variable equation}.
Such a solution will be called a trivial solution. Loosely speaking, other types of solutions are \textbf{nontrivial}. For example, solutions of \eqref{4variable equation} where $x_1, x_2, y_1, y_2$ are distinct are always nontrivial. See subsection \ref{subsection: nontrivial solutions} for a precise definition of nontrivial solutions. 

We measure the \textit{size} of a set using the Fourier dimension. 

\begin{definition}[Fourier Dimension]\label{Fourier dimension definition}
Let $E\subset \R$ be a Borel set.
The \textit{Fourier dimension} $E$, denoted \text{dim}$_\mathbb{F}(E)$, is defined as 
\[
\text{dim}_\mathbb{F}(E) = \sup \left\{ 0\le \beta\le 1 :
\begin{aligned}
&\exists \text{ a probability measure $\mu$ supported on $E$ and } \\ &\text{$C \in (0, \infty)$ such that }
|\hat{\mu}(\xi)| \le  C(1+|\xi |)^{-\beta/2}  \  \forall \xi \in \R 
\end{aligned}
\right\}. 
\]
\end{definition}

Reformulating our question \eqref{question main}, our objective is to find the best constant $\beta$ such that dim$_\mathbf{F}(E)>\beta$ implies $E$ contains a nontrivial solution to \eqref{4variable equation} for some $\{a,b,c,d\}$.
Our main result in line with this inquiry is displayed below.

\begin{theorem}\label{theorem: containment result}\footnote{A more general version of Theorem \ref{theorem: containment result} holds for multivariate translation-invariant equations. This is currently work in progress \cite{phdthesis}.}
Let $E\subset [0,1]$ be a Borel set with $\text{dim}_\mathbb{F}(E)>\frac{1}{2}$.
Then there exists an equation of the form \eqref{4variable equation} for which $E$ contains a nontrivial solution.
\end{theorem}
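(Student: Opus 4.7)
The plan is to establish the theorem for the Sidon instance $(a,b,c,d)=(1,1,1,1)$, i.e.\ the equation $x_1+x_2=y_1+y_2$, by Fourier-analytic counting. By inner regularity of Borel measures we may assume $E$ is compact. Using Definition \ref{Fourier dimension definition} fix $\beta\in(1/2,\dim_{\mathbb{F}}(E))$ and a probability measure $\mu$ on $E$ with $|\hat\mu(\xi)|\le C(1+|\xi|)^{-\beta/2}$. Set $L(x_1,x_2,y_1,y_2)=x_1+x_2-y_1-y_2$ and let $\sigma=L_\ast(\mu^4)$ be the pushforward on $\R$. The aim is to show that the level set $\{L=0\}$ carries enough $\mu^4$-mass to contain a point outside the diagonal subvariety of trivial solutions.

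A direct computation gives $\hat\sigma(\xi)=|\hat\mu(\xi)|^4$, which is nonnegative and bounded by $C^4(1+|\xi|)^{-2\beta}$. Since $2\beta>1$ we have $\hat\sigma\in L^1(\R)$, so $\sigma$ admits a continuous density $\rho_\sigma$ with
\[
\rho_\sigma(0)=\int|\hat\mu(\xi)|^4\,d\xi>0,
\]
equivalently $\mu^4(\{|L|<\epsilon\})\sim 2\epsilon\rho_\sigma(0)$ as $\epsilon\to 0$. The trivial set is $T=\{x_1=y_1,\,x_2=y_2\}\cup\{x_1=y_2,\,x_2=y_1\}$; let $T_\eta$ be its $\eta$-neighborhood, obtained by requiring the matching coordinate differences to lie in $(-\eta,\eta)$. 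On the piece $T_\eta^{(1)}=\{|x_1-y_1|<\eta,\,|x_2-y_2|<\eta\}$, the Fourier integrand $e^{-2\pi i\xi L}\mathbf{1}_{T_\eta^{(1)}}$ factors over the independent pairs $(x_1,y_1)$ and $(x_2,y_2)$, and Plancherel yields
\[
\rho_{L_\ast(\mu^4\mathbf{1}_{T_\eta^{(1)}})}(0)=\int_{|u|<\eta}\rho_-(u)^2\,du,
\]
where $\rho_-$ is the density of $\mu\ast\mu^-$. The crucial point is that $\beta>1/2$ forces $\int|\hat\mu|^4\,d\xi<\infty$, hence $\rho_-\in L^2(\R)$, so the right-hand side tends to $0$ as $\eta\to 0$; the same argument applies to $T_\eta^{(2)}$.

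Choosing $\eta$ small enough that the combined trivial-neighborhood contribution is less than $\rho_\sigma(0)/2$, the complementary "safe" slab $\{|L|<\epsilon\}\setminus T_\eta$ has positive $\mu^4$-mass for all sufficiently small $\epsilon$, and therefore contains a point $z_\epsilon\in E^4\setminus T_\eta$. By compactness of $E$, a subsequential limit $z^\ast\in E^4$ satisfies $L(z^\ast)=0$ and $d(z^\ast,T)\ge\eta>0$, so $z^\ast$ is a nontrivial solution to the Sidon equation in $E^4$. The main difficulty is precisely this trivial-nontrivial separation: while $\sigma$ is a priori a continuous measure near $0$, its mass there could in principle concentrate on the diagonal $T$, and it is exactly the $L^2$-regularity of $\mu\ast\mu^-$ afforded by $\beta>1/2$ that rules this out and lets the compactness extraction produce a genuinely nontrivial solution.
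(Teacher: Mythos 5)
Your argument is correct and takes a genuinely different route from the paper's. The paper builds, for every rational $(s,t)\in(0,1)^2$, a configuration measure $\Lambda_{(s,t)}$ (Lemmas~\ref{lemma: Fourier side of the limit}--\ref{lemma: constant for linear functional}, Proposition~\ref{prop: summary of measure properties}), shows that nonvanishing of any one of them produces a nontrivial solution, and then argues by contradiction that they cannot all vanish: it integrates $\langle\Lambda^\epsilon_{(s,t)},|x_1-x_2|\rangle$ over the slope parameter $s$, pulls out a lower bound of order one from the total mass of $\mu_\epsilon^{\otimes 4}$, and then needs both continuity of $(s,t)\mapsto\langle\Lambda_{(s,t)},f\rangle$ and the dominated-convergence estimate $\int_0^1 C'(s,t)\,ds<\infty$ (Proposition~\ref{prop: DCT requirement}) to pass to the limit. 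You instead fix the single equation $(a,b,c,d)=(1,1,1,1)$, note that the corresponding configuration measure has total mass exactly $\int|\hat\mu(\xi)|^4\,d\xi$, which is strictly positive and finite precisely when $\beta>1/2$, and separate off the diagonal via a second Plancherel computation ($\rho_-\in L^2$ so $\int_{|u|<\eta}\rho_-^2\to 0$). This bypasses the entire averaging-over-$s$ apparatus and even yields a sharper conclusion: the Sidon equation $x_1+x_2=y_1+y_2$ itself must always have a nontrivial solution in $E$, not merely \emph{some} equation of the form \eqref{4variable equation}. The reason your shortcut works where the paper has to average is that at $(s,t)=(1/2,1/2)$ the integrand $\hat\mu(\xi)\hat\mu(-\xi)\hat\mu(-\xi)\hat\mu(\xi)=|\hat\mu(\xi)|^4$ is manifestly nonnegative, whereas for generic $(s,t)$ one cannot rule out cancellation in $\langle\Lambda_{(s,t)},1\rangle$ without the integral-in-$s$ trick; what the paper's framework buys is the uniform treatment over all $(s,t)$ that the author needs for the multivariate generalization flagged in the footnote. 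Two small points to tidy in a formal write-up: since $T_\eta$ is open, its complement is closed and the subsequential limit automatically satisfies $z^\ast\notin T_\eta$, but the quantitative distance bound should read $d(z^\ast,T)\ge\eta/\sqrt2$ rather than $\eta$; and the reduction to compact $E$ is best phrased by replacing $E$ with $\mathrm{supp}(\mu)\subset[0,1]$, the same implicit step the paper takes when it switches to closed $E$ at the start of Section~\ref{section: measure construction}.
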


In Theorem \ref{theorem: containment result}, $\dim_{\mathbb F}$ cannot be replaced by Hausdorff dimension. This is originally due to a body of work, authored by Keleti \cite{08keleti}, Maga \cite{maga} and Math\'e \cite{MATHE2017691}. Let us expand on these results. Keleti calls a collection of points $\{x_1,x_2,y_1,y_2\}\subset \R$ a rectangle if they satisfy $x_1-y_1=x_2-y_2$(observe this is a translation invariant linear equation) and shows that there exists a full Hausdorff dimensional set in the real line that avoids (nontrivial) rectangles where $x_1 \ne y_1, x_2 \ne y_2$. 
These results were expanded by Maga in \cite{maga} to full dimensional sets in $\R^d$ that are able to avoid the vertices of any parallelogram; Maga again defines parallelograms to be a collection of points $\{ x_1,x_2,x_3,x_4\}\in \R^d$ satisfying $x_2-x_1 = x_4-x_3$.
Such results were considerably generalized  by Math\'e in \cite{MATHE2017691}, which provides a construction of a compact set not containing a solution to countably many given polynomial equations. 
In particular, a special case of Math\'e's result is the existence of a rationally independent set in $\R$ of full Hausdorff dimension, similar to the result of Keleti. 
That is, there exists a set $E\subset \R$ such that for any collection of points $\{x_1,\dots,x_n\}\subset E^n $ satisfying $\sum a_ix_i=0$ with $a_i\in \Z$ for all $i$, it must be that $a_1=\cdots=a_n=0$. 
As is discussed in \cite{LiangandPramanik}, a rationally independent set cannot have positive Fourier dimension; Theorem \ref{theorem: containment result} aligns with this result.

Our next result partially complements theorem \ref{theorem: containment result}.

\begin{theorem}\label{E For avoidance}
Fix $\{a,b,c,d\}\subset\N$ obeying \eqref{4variable equation}.
There exists a set $E\subset [0,1]$ of Fourier dimension $1/2$ that does not contain any nontrivial solutions of \eqref{4variable equation}. 
\end{theorem}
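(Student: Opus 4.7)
The plan is to build $E$ as a self-similar Cantor-type set with Sidon-like digit sets at each scale, in the spirit of the Salem--K\"{o}rner--\L{}aba--Pramanik random constructions. The threshold $\tfrac{1}{2}$ is natural because the maximum size of a Sidon-like set in $\{0,\dots,M-1\}$ compatible with avoidance of the equation is $\Theta(\sqrt{M})$, and this sparsity translates into a factor $|S_k|^{-1/2}\sim M_k^{-1/4}$ at each digit level of $\hat\mu$, giving overall decay $|\xi|^{-1/4}$ and hence Fourier dimension $\tfrac{1}{2}$.

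The first step is a discrete lemma: for every large $M$, produce $S\subset\{0,\dots,M-1\}$ with $|S|\asymp\sqrt{M}$ such that every solution of $as_1+bs_2=cs_3+ds_4$ in $S^4$ is trivial (in the paper's precise sense), and such that $|\widehat{\mathbf{1}_S}(\xi)|\lesssim\sqrt{|S|}\,M^{o(1)}$ for all nonzero $\xi\in\Z/M\Z$. For $a=b=c=d=1$ both conditions hold for classical Singer Sidon sets, with the character-sum bound a consequence of standard estimates on Gauss sums. For general $(a,b,c,d)$ with $a+b=c+d$, random selection alone cannot achieve $\sqrt{M}$: the equation cuts out $\Theta(M^3)$ lattice four-tuples in $\{0,\dots,M-1\}^4$, so probabilistic deletion only yields $|S|\asymp M^{1/3}$. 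Reaching $\sqrt{M}$ requires algebraic input---for instance, a modular construction mimicking Singer but adapted to the weights $(a,b,c,d)$.

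Given such building blocks, fix rapidly increasing scales $M_k$, set $N_k=M_1\cdots M_k$, and let $S_k\subset\{0,\dots,\lfloor M_k/(a+b+c+d)\rfloor\}$ with $|S_k|\asymp\sqrt{M_k}$ come from the lemma. The restricted digit range prevents carries in $ax_1+bx_2=cy_1+dy_2$ evaluated on $E^4$, where $E=\bigcap_k E_k$ and $E_k$ is the disjoint union of intervals of length $N_k^{-1}$ with left endpoints $\sum_{j\le k}s_j/N_j$, $(s_1,\dots,s_k)\in S_1\times\cdots\times S_k$. Any solution in $E^4$ therefore decouples into per-scale equations $as_j(x_1)+bs_j(x_2)=cs_j(y_1)+ds_j(y_2)$ for every $j$; per-scale avoidance forces triviality at each level, and the carry-free separation lifts this to triviality of the original 4-tuple. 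Equipping $E$ with its uniform self-similar measure $\mu$ and multiplying the per-scale character-sum bounds yields $|\hat\mu(\xi)|\lesssim|\xi|^{-1/4+o(1)}$, hence $\dim_{\mathbb{F}}(E)\ge\tfrac{1}{2}$; the matching upper bound is automatic from $\dim_{\mathbb{F}}\le\dim_{\mathrm{H}}$ together with the similarity-dimension identity $\dim_{\mathrm{H}}(E)=\tfrac{1}{2}$.

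The main obstacle is the per-scale building block for general $(a,b,c,d)$: achieving cardinality $\asymp\sqrt{M}$, avoidance of the weighted equation, and the optimal character-sum decay simultaneously. A purely probabilistic approach falls short of the $\sqrt{M}$ cardinality, so genuine algebraic structure is essential. Once this discrete lemma is in hand, the remaining multi-scale assembly, carry analysis, and Fourier estimate are fairly routine. A secondary subtlety is calibrating the discrete notion of ``trivial'' to the paper's precise definition of nontrivial solutions, which the carry-free scale separation is designed to handle cleanly.
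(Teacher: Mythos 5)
There is a genuine gap here, and you flag it yourself at the end: your construction requires a per-scale digit set $S\subset\{0,\dots,M-1\}$ that simultaneously has cardinality $\asymp\sqrt{M}$, avoids the weighted equation $as_1+bs_2=cs_3+ds_4$ up to trivial solutions, \emph{and} satisfies the character-sum bound $|\widehat{\mathbf{1}_S}(\xi)|\lesssim\sqrt{|S|}\,M^{o(1)}$. For $a=b=c=d$ the Singer construction delivers all three, but for general coefficients no such object is available; the Ruzsa/Behrend-type construction in \cite{ruzsa1}, which gives $|S|\gtrsim\sqrt{M}\,e^{-\alpha\sqrt{\log M}}$ together with avoidance, carries no useful exponential-sum estimate (Behrend sets are in fact notoriously bad from the Fourier point of view). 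Without the third property, multiplying per-scale character bounds does not yield $|\hat\mu(\xi)|\lesssim|\xi|^{-1/4+o(1)}$, so your argument for the Fourier lower bound does not close. (Your remark that naive random \emph{deletion} only reaches $M^{1/3}$ is correct but slightly beside the point --- a deterministic Behrend-type set already beats that; the real missing piece is the Fourier estimate, not the cardinality.)

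The paper avoids this obstruction by decoupling the two tasks, using Shmerkin's random Cantor construction \cite{pablo} rather than a deterministic self-similar set. At the $n$-th stage the surviving children of the basic interval $I_{\mathbf j}$ are not indexed by the fixed set $U_{M_{n+1}}$ itself but by a randomly shifted copy $U_{M_{n+1}}+l_{\mathbf j}\bmod M_{n+1}$, with an \emph{independent} shift for each parent interval. Translation invariance of \eqref{4variable equation} guarantees that shifted copies still avoid nontrivial solutions, so the transference argument (Lemma \ref{partial avoidance in the real line}, which plays the role of your carry analysis by dilating the discrete set by $a+b+1$) goes through unchanged. Crucially, the Fourier decay in Theorem \ref{pablo theorem for fourier decay} comes entirely from the randomness of the shifts $\{l_{\mathbf j}\}$ and depends on $U_{M_n}$ only through its cardinality $L_n$; no character-sum estimate for $U_{M_n}$ is used. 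Thus the deterministic input need only supply avoidance and size $\gtrsim\sqrt{M_n}\,e^{-\alpha\sqrt{\log M_n}}$, both of which Ruzsa provides, and $\liminf\log(L_1\cdots L_n)/\log(M_1\cdots M_n)=1/2$ then yields Fourier dimension exactly $1/2$ (your upper-bound observation via Hausdorff dimension is the same as the paper's implicit one). Your overall architecture --- multi-scale Cantor set with avoiding digit sets and carry control --- matches the paper; the random-shift mechanism for Fourier decay is the essential ingredient you are missing.
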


It is unclear at the moment if the dimensional threshold 1/2 is sharp.
Our methods rely on a transference principle (see section \ref{subsection: transference}) of an avoiding set from the discrete to the continuum, and as such, the Fourier dimension is directly tied to known cardinality bounds for such sets. 
The following question is of interest in this regard.

\begin{question}
A set $E\subset [0,1]$ is Sidon if there does not exist a nontrivial collection $\{x,y,u,v\}\subset E$ obeying $x+y=u+v$.
What is the maximal Fourier dimension of a Sidon set in the real line?
\end{question}

Theorem \ref{E For avoidance} shows that Sidon sets can have Fourier dimension at least 1/2 but does not rule out the possibility of a Sidon set having Fourier dimension larger than $1/2$.

\subsection{Nontrivial Solutions and Genus}\label{subsection: nontrivial solutions}

In this subsection we give a precise definition of \textbf{nontrivial solutions} to an equation like \eqref{4variable equation}. 
This relies on the notion of \textbf{genus} given in \cite{ruzsa1}, which we include here. 

Let us write \eqref{4variable equation} in the following way 
\[
ax_1+bx_2+(-c)y_1 +(-d)y_2 = 0.
\]
We consider two examples to lead our discussion
\begin{align*}
    (i) \ \{a,b,-c,-d\} &= \{1,1,-1,-1\} &     (ii) \ \{a,b,-c,-d\} &= \{2,2,-3,-1\}.
\end{align*}
Note that even though both examples satisfy $a+b=c+d$, in $(i)$ there is a smaller set of coefficients that sums to zero, namely $a-c=0$ and $b-d=0$. 
This is not possible for example $(ii)$. We will call $(ii)$ to be of genus 1, and $(i)$ to be of genus 2.
\begin{definition}[Genus]
We say that \eqref{4variable equation} is of \textbf{genus one} if there does not exist a strict subset $T$ of $\{a,b,-c,-d\}$ such that $\sum\limits_{t_i\in T} t_i=0$.
Otherwise we say \eqref{4variable equation} is of \textbf{genus two}. In this case $\{a,b\}=\{c,d\}$.
\end{definition}

\begin{definition}
Consider an equation satisfying \eqref{4variable equation}. 
\begin{itemize}
    \item If \eqref{4variable equation} is of genus one, then a solution $\{x_1, x_2, y_1, y_2 \}$ of \eqref{4variable equation} is said to be \textbf{trivial} if $x_1=x_2=y_1=y_2$. 

    \item If \eqref{4variable equation} is of genus two, a solution is called \textbf{trivial} if $x_1=y_1$ and $x_2 = y_2$.

    \item All other solutions (in any case) are referred to as \textbf{nontrivial solutions}.
\end{itemize}
\end{definition}

\subsection{Proof Techniques and Layout of the Paper}

To prove Theorem \ref{theorem: containment result} we follow the strategies introduced in \cite{LiangandPramanik}.
There the authors construct specific measures to identify nontrivial solutions to specific translation invariant linear equations. 
In this rendition, section \ref{section: measure construction} is dedicated to constructing similar measures that identify nontrivial solutions to equations of the form \eqref{4variable equation}. 
Once that is complete, section \ref{section: main theorem proof} sets about proving theorem \ref{theorem: containment result}. 
The final section describes the construction of a Salem set that is then used to prove theorem \ref{E For avoidance}.

\section{Construction of Measures}\label{section: measure construction}

We construct a class of measures and apply them in the next section to prove theorem \ref{theorem: containment result}. 
To this end, fix a closed set $E\subset [0,1]$ satisfying $\dim_\mathbf{F}(E)>\frac{1}{2}$ and for each choice of coefficients $\{a,b,c,d\}\subset \N$ the goal is to construct a measure that if nonzero signals the existence of nontrivial solutions to \eqref{4variable equation} inside $E$. 
To build up to this measure, we start with a functional that acts on the subspace $\mathcal{V}\subset C([0,1]^3)$ of Schwartz functions whose Fourier transform is compactly supported. 
We note that $\mathcal{V}$ is dense in $C([0,1]^3)$ under the sup norm.
This was also showed in \cite{LiangandPramanik} but we include proof in section \ref{section: denseness} for completeness.

Given any choice $\{a,b,c,d\}\subset \N$, the associated equation \eqref{4variable equation} can be identified with a pair $(s,t)\in (0,1)^2\cap \Q^2$. 
To see this set $S=a+b=c+d$, write equation \eqref{4variable equation} as
\begin{equation}
\frac{a}{S} x_1 + \frac{b}{S}x_2 - \frac{c}{S}y_1 - \frac{d}{S}y_2 = 0 \text{ and define } g(x_1,x_2;y_1,y_2) := \frac{a}{S} x_1 + \frac{b}{S}x_2 - \frac{c}{S}y_1 - \frac{d}{S}y_2.
\end{equation}
We use this pair $(s,t)$ to construct our desired measure; however, in a later lemma, this identification will become useful.

Recall that $E\subset[0,1]$ with $\dim_\mathbf{F}(E)=\beta>1/2$ and let $\mu$ be a probability measure supported on $E$ satisfying 
\[
|\hat{\mu}(\xi)| \le C (1+|\xi|)^{-\beta/2}
\]
for some positive finite constant $C$. 
We mollify this measure $\mu$ with the following approximation to the identity to define a functional $\Lambda_{(s,t)}$.
Take $\psi\in C_c^\infty (\R)$, $\psi \ge 0$, supp$(\psi)\subset[-1,1]$ and $\int\psi = 1$.
For all $\epsilon>0$ define
\begin{equation}
\psi_\epsilon(x) = \frac{1}{\epsilon}\psi\left(\frac{x}{\epsilon}\right) \text{ and } \mu_\epsilon := \mu * \psi_\epsilon.
\end{equation}
Observe that $\mu_\epsilon$ is supported on an $\epsilon$-neighborhood of $E$, denoted by $\mathcal{N}_\epsilon(E)$.

For each pair $(s,t)\in(0,1)^2\cap \Q^2$ define the following limit for functions $f\in \mathcal{V}$ 
\begin{align}\label{epsilon lambda}
\langle \Lambda_{(s,t)} , f \rangle &:= \lim_{\epsilon\rightarrow 0} \langle \Lambda_{(s,t)}^\epsilon , f \rangle \\
&= \lim_{\epsilon \rightarrow 0}\int\limits_{[0,1]^3} f(x_1,x_2,y_1)\mu_\epsilon(x_1)\mu_\epsilon(x_2)\mu_\epsilon(y_1)\mu_\epsilon\left(    \frac{sx_1 +(1-s)x_2-ty_1}{1-t}   \right)dx_1dx_2dy_1. \notag
\end{align}

The following Lemma establishes this limit as well defined. 

\begin{lemma}\label{lemma: Fourier side of the limit}
If $f\in\mathcal{V}$, then for every $(s,t)\in (0,1)^2$, the limit in \eqref{epsilon lambda} is absolutely convergent and equals 
    \begin{equation}\label{Fourier version of config int}
    \langle \Lambda_{(s,t)} , f \rangle = \int\limits_{\R^4}\hat{\mu}(\xi) \prod_{i=1}^3\hat{\mu}(\eta_i) \hat{f} \left(  -\eta_1 -\frac{s}{1-t}\xi , -\eta_2 - \frac{(1-s)}{1-t} \xi , -\eta_3 + \frac{t}{1-t}\xi  \right)d\xi d\eta_1 d\eta_2 d\eta_3. 
    \end{equation}
\end{lemma}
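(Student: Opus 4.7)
My plan is to pass to the Fourier side for each fixed $\epsilon > 0$, derive an analogue of \eqref{Fourier version of config int} with $\widehat{\mu_\epsilon}$ in place of $\hat\mu$, and then send $\epsilon \to 0$. First I would apply Fourier inversion to each of the four $\mu_\epsilon$ factors in the integrand (with dual variables $\eta_1,\eta_2,\eta_3,\xi$) and to $f$ itself (with dual variables $\zeta_1,\zeta_2,\zeta_3$); since $\mu_\epsilon$ is supported in an $\epsilon$-neighborhood of $[0,1]$, the domain $[0,1]^3$ may be replaced by $\R^3$ up to a boundary contribution that vanishes as $\epsilon \to 0$. A Fubini swap then lets the $(x_1,x_2,y_1)$-integrals over $\R^3$ act first, producing Dirac deltas that enforce
\[
\zeta_1 = -\eta_1 - \tfrac{s}{1-t}\xi, \qquad \zeta_2 = -\eta_2 - \tfrac{1-s}{1-t}\xi, \qquad \zeta_3 = -\eta_3 + \tfrac{t}{1-t}\xi.
\]
Integrating out the $\zeta$ variables against these deltas returns the right-hand side of \eqref{Fourier version of config int} with $\widehat{\mu_\epsilon}$ replacing $\hat\mu$.

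The main obstacle is justifying Fubini and dominated convergence with an $\epsilon$-uniform integrable envelope; this is where $\dim_{\mathbb F}(E) > 1/2$ is used. The compact support of $\hat f$, say in a ball $B(0,R) \subset \R^3$, together with the three delta relations, confines each $\eta_i$ to a ball of radius $R$ around $-c_i\xi$, where $c_1 = s/(1-t)$, $c_2 = (1-s)/(1-t)$, $c_3 = -t/(1-t)$ are nonzero since $(s,t) \in (0,1)^2$. On that region $(1+|\eta_i|)^{-\beta/2}$ is comparable to $(1+|\xi|)^{-\beta/2}$ up to a constant depending only on $R$ and $(s,t)$. Using $|\widehat{\mu_\epsilon}(\eta)| = |\hat\mu(\eta)|\,|\hat\psi(\epsilon\eta)| \le C(1+|\eta|)^{-\beta/2}$ (since $\|\hat\psi\|_\infty \le \|\psi\|_{L^1} = 1$), I would dominate the integrand by
\[
C'_R\, \|\hat f\|_\infty\, (1+|\xi|)^{-\beta/2} \prod_{i=1}^{3}(1+|\xi|)^{-\beta/2}\, \mathbf{1}_{|\eta_i + c_i\xi| \le R}.
\]
Integrating the $\eta_i$ out over balls of volume $O(R^3)$ and the remaining $\xi$ over $\R$ yields a constant times $\int_\R (1+|\xi|)^{-2\beta}\, d\xi$, which is finite precisely because $2\beta > 1$.

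For the final step, note that $\widehat{\mu_\epsilon}(\xi) = \hat\mu(\xi)\hat\psi(\epsilon\xi) \to \hat\mu(\xi)$ pointwise as $\epsilon \to 0$ by continuity of $\hat\psi$ and $\hat\psi(0)=1$. The envelope above is independent of $\epsilon$ and integrable, so dominated convergence produces \eqref{Fourier version of config int} in the limit, and the same envelope shows that the limiting integral converges absolutely. The technical heart of the argument is thus the observation that the four $\hat\mu$ factors, pinned onto a one-dimensional slice by the compact support of $\hat f$, combine to give decay $(1+|\xi|)^{-2\beta}$ on the real line, integrable exactly at the threshold $\beta > 1/2$ that appears in the hypothesis.
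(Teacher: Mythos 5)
Your proof is correct and follows essentially the same route as the paper: Fourier-invert the four $\mu_\epsilon$ factors, use the compact support of $\hat f$ to confine each $\eta_i$ to an $O(R)$-neighborhood of $c_i\xi$ (with every $c_i$ nonzero since $(s,t)\in(0,1)^2$), conclude that the four $\hat\mu$-factors collapse to a single one-dimensional decay in $\xi$ integrable exactly when $\dim_{\mathbb F}E > 1/2$, and invoke dominated convergence with the $\epsilon$-uniform envelope coming from $\|\hat\psi\|_\infty \le 1$. The only cosmetic differences are that you keep the Fourier-dimension normalization $|\hat\mu(\xi)|\lesssim(1+|\xi|)^{-\beta/2}$ and track the condition $2\beta>1$, whereas the paper's proof writes the decay as $(1+|\xi|)^{-\beta}$ and tracks $4\beta>1$, and that you introduce an explicit Dirac-delta step where the paper simply recognizes the $(x_1,x_2,y_1)$-integral as $\hat f$ evaluated at the shifted frequency; these are notational, not substantive.
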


Even further we can find a constant so that the limit above behaves like a functional over $\mathcal{V}$.

\begin{lemma}\label{lemma: constant for linear functional}
For every $(s,t)\in(0,1)^2$, there exists a positive finite constant $C'((s,t))$ depending only on $C$ and $(s,t)$ such that for all $f\in \mathcal{V}$
\begin{equation}\label{Statement towards a linear functional}
\sup_{\epsilon>0}|\langle \Lambda_{(s,t)}^\epsilon , f \rangle| \le C'((s,t)) ||f||_\infty \text{ and so } |\langle \Lambda_{(s,t)} , f \rangle| \le C'((s,t)) ||f||_\infty.
\end{equation}
\end{lemma}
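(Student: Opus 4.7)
The plan is to bound the $\epsilon$-version first by peeling off $f$ and then applying a Fourier computation almost identical to Lemma \ref{lemma: Fourier side of the limit}, but for the constant function in place of $f$. Since $f \in \mathcal{V}$ and $\mu_\epsilon \ge 0$, the triangle inequality gives
\[
|\langle \Lambda_{(s,t)}^\epsilon, f\rangle| \le \|f\|_\infty \cdot I_\epsilon, \quad I_\epsilon := \int_{[0,1]^3} \mu_\epsilon(x_1)\mu_\epsilon(x_2)\mu_\epsilon(y_1)\mu_\epsilon\!\left(\tfrac{sx_1+(1-s)x_2-ty_1}{1-t}\right)dx_1 dx_2 dy_1,
\]
so the task reduces to showing $\sup_{\epsilon>0} I_\epsilon \le C'((s,t))$ for a constant independent of $\epsilon$.

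To bound $I_\epsilon$, I would mimic the Fourier computation from Lemma \ref{lemma: Fourier side of the limit}. Since $\mu_\epsilon \in \mathcal{S}(\R)$ (as $\mu$ is a finite measure convolved with a Schwartz bump), Fourier inversion applied to the last factor $\mu_\epsilon(\cdot)$ and Fubini yield
\[
I_\epsilon = \int_\R \widehat{\mu_\epsilon}(\xi)\,\widehat{\mu_\epsilon}\!\left(-\tfrac{s}{1-t}\xi\right)\widehat{\mu_\epsilon}\!\left(-\tfrac{1-s}{1-t}\xi\right)\widehat{\mu_\epsilon}\!\left(\tfrac{t}{1-t}\xi\right)d\xi.
\]
Because $\widehat{\mu_\epsilon}(\xi) = \hat{\mu}(\xi)\hat{\psi}(\epsilon\xi)$ and $\|\hat{\psi}\|_\infty \le \|\psi\|_{L^1} = 1$, the pointwise bound $|\widehat{\mu_\epsilon}(\xi)| \le |\hat{\mu}(\xi)| \le C(1+|\xi|)^{-\beta/2}$ holds uniformly in $\epsilon$. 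Substituting this bound into the displayed integral reduces matters to estimating
\[
\int_\R (1+|\xi|)^{-\beta/2}(1+|\lambda_1\xi|)^{-\beta/2}(1+|\lambda_2\xi|)^{-\beta/2}(1+|\lambda_3\xi|)^{-\beta/2}\,d\xi,
\]
where $\lambda_1 = s/(1-t)$, $\lambda_2 = (1-s)/(1-t)$, $\lambda_3 = t/(1-t)$ are strictly positive.

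Each factor can be controlled by the elementary estimate $(1+|\lambda\xi|)^{-\beta/2} \le c_\lambda (1+|\xi|)^{-\beta/2}$ with $c_\lambda$ depending only on $\lambda$ (explicitly, $c_\lambda = \max(1, \lambda^{-\beta/2})$ up to a harmless factor), so the integral is bounded by $C_{s,t}\int(1+|\xi|)^{-2\beta}d\xi$. The assumption $\beta > 1/2$ gives $2\beta > 1$, hence this integral is finite and yields a bound $I_\epsilon \le C'((s,t))$ independent of $\epsilon$. The bound for $\Lambda_{(s,t)}$ itself follows by letting $\epsilon \to 0$ using Lemma \ref{lemma: Fourier side of the limit}. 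The only mild issue is bookkeeping the dependence of $C'((s,t))$ on the scaling factors $\lambda_i$, which blows up as $s,t$ approach $0$ or $1$; this is acceptable since the lemma only asks for finiteness for each fixed $(s,t) \in (0,1)^2$.
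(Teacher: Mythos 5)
Your proposal is correct and follows essentially the same route as the paper: peel off $\|f\|_\infty$ using nonnegativity of $\mu_\epsilon$, invert the last mollified factor to pass to the Fourier side, bound $|\widehat{\mu_\epsilon}|\le|\hat\mu|$ uniformly in $\epsilon$, and conclude via the decay estimate, giving an integrable tail exponent $4\cdot(\beta/2)=2\beta>1$. The only cosmetic difference is that you absorb the scaling factors $\lambda_i$ into a constant $c_{\lambda_i}$ per factor, whereas the paper keeps a single $\min\{1,\lambda_1,\lambda_2,\lambda_3\}$ inside the integrand; both yield the same $(s,t)$-dependent constant.
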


Using Lemma \ref{lemma: Fourier side of the limit} and Lemma \ref{lemma: constant for linear functional} we are equipped to show the existence of the measure we are after, this is the content of the following proposition. 

\begin{proposition}\label{prop: summary of measure properties}
Fix a choice of coefficients $\{a,b,c,d\}\subset \N$ satisfying $a+b=c+d$ and consider the associated equation as in \eqref{4variable equation}. 
Let $E\subset[0,1]$ be a closed set and $\mu$ be a probability measure supported on $E$ satisfying
\[
|\hat{\mu}(\xi)| \le C (1+|\xi|)^{-\beta}
\]
for some finite positive constant $C$ and $\beta>1/4$. 
Then the following conclusions hold:
\begin{enumerate}[label = (\alph*)]
    \item The limit in \eqref{epsilon lambda} holds for all $f\in C([0,1]^3)$. Furthermore $\langle \Lambda_{(s,t)} , f \rangle $ can be identified with a nonnegative bounded linear functional on $C([0,1]^3)$ obeying
    \[
    \left|\langle \Lambda_{(s,t)} , f \rangle \right| \le C'(s,t) ||f||_\infty.
    \]
    In particular, $\Lambda_{(x,t)}$ can be identified as integration against a nonnegative Borel measure which we also denote by $\Lambda_{(s,t)}$.

    \item If $\Lambda_{(s,t)}$ is nontrivial, then supp$(\Lambda_{(s,t)}) \subset \mathcal{S}$ and assigns measure zero to the trivial solutions $\mathcal{T}$, which is given by 
    \[
    \mathcal{T}= \left\{(x_1,x_2,y_1,y_2)\in \mathcal{S}(E,(s,1-s,t,1-t)) :  (x_1,x_2)=(y_1,y_2) \ \text{or} \ (x_1,x_2)=(y_2,y_1) \right\}.
    \]
\end{enumerate}
\end{proposition}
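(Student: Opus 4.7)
The plan is to use Lemmas \ref{lemma: Fourier side of the limit} and \ref{lemma: constant for linear functional} to identify $\Lambda_{(s,t)}$ with a bounded nonnegative linear functional on $C([0,1]^3)$, invoke Riesz representation for (a), and handle the two assertions in (b) by separate arguments.

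For part (a), each $\Lambda^\epsilon_{(s,t)}$ is already defined on all of $C([0,1]^3)$ by the integral formula \eqref{epsilon lambda}, since $\mu_\epsilon$ is a bounded nonnegative continuous function. Lemma \ref{lemma: constant for linear functional} gives the uniform-in-$\epsilon$ operator bound $|\langle \Lambda^\epsilon_{(s,t)}, f\rangle|\le C'(s,t)\|f\|_\infty$ on the dense subspace $\mathcal{V}$, which transfers to $C([0,1]^3)$ by continuity. To upgrade the pointwise convergence on $\mathcal{V}$ to all of $C([0,1]^3)$, I would run a standard $3\epsilon$ argument: for $f\in C([0,1]^3)$, pick $f_n\in \mathcal{V}$ with $\|f-f_n\|_\infty\to 0$, and combine the uniform bound $|\langle \Lambda^\epsilon, f-f_n\rangle|\le C'(s,t)\|f-f_n\|_\infty$ with the convergence $\langle \Lambda^\epsilon, f_n\rangle\to \langle \Lambda, f_n\rangle$ from Lemma \ref{lemma: Fourier side of the limit}. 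Nonnegativity of $\Lambda_{(s,t)}$ passes to the limit from the manifest nonnegativity of the integrand defining each $\Lambda^\epsilon_{(s,t)}$. Riesz representation then produces the Borel measure, which is naturally identified with a measure on $\mathcal{S}\subset [0,1]^4$ via the parametrization $(x_1,x_2,y_1)\mapsto (x_1,x_2,y_1,y_2)$ with $y_2=(sx_1+(1-s)x_2-ty_1)/(1-t)$.

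For the support assertion in (b), set $K=\{(x_1,x_2,y_1)\in E^3: y_2\in E\}\subset [0,1]^3$, the parametrized image of $\mathcal{S}$. Since $E$ is closed, so is $K$. For any continuous $f$ with $\text{supp}(f)$ disjoint from $K$, compactness yields a positive distance $d>0$, and for sufficiently small $\epsilon$ (smaller than a constant multiple of $d$ depending on the Lipschitz constant of the map $(x_1,x_2,y_1)\mapsto y_2$) the $\epsilon$-fattening of $K$ --- outside of which the integrand in $\Lambda^\epsilon_{(s,t)}$ vanishes by the support properties of $\mu_\epsilon$ --- remains disjoint from $\text{supp}(f)$. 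Hence $\langle \Lambda^\epsilon_{(s,t)},f\rangle=0$ for all small $\epsilon$, so $\langle \Lambda_{(s,t)},f\rangle=0$, giving $\text{supp}(\Lambda_{(s,t)})\subset K\subset\mathcal{S}$.

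The remaining claim, $\Lambda_{(s,t)}(\mathcal{T})=0$, is the main obstacle and is where the hypothesis $\beta>1/4$ becomes essential. The image of $\mathcal{T}$ under the parametrization is a low-dimensional algebraic variety in $[0,1]^3$: the line $\{x_1=x_2=y_1\}$ in the genus one case, and a 2-plane ($\{x_1=y_1\}$ when $s=t$, or $\{x_2=y_1\}$ when $s+t=1$) in the genus two cases. For each $\delta>0$ I would bound $\Lambda_{(s,t)}(\mathcal{T})$ from above by $\langle \Lambda_{(s,t)}, f_\delta\rangle$, where $f_\delta$ is a smooth nonnegative cutoff supported in a $\delta$-tube around this variety and equal to one on it, and then expand $\langle \Lambda_{(s,t)},f_\delta\rangle$ via the Fourier formula \eqref{Fourier version of config int}. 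The Fourier transform $\hat{f}_\delta$ contributes delta functions enforcing the affine relations defining the variety, together with a factor $\hat{\phi}_\delta(\zeta)=\delta\hat{\phi}(\delta\zeta)$ accounting for an $O(\delta)$ gain in the transverse directions, while the four copies of $\hat{\mu}$ supply the decay $(1+|\cdot|)^{-\beta}$. Matching these should reduce the estimate to convergence of a multi-variable integral of products of powers of $(1+|\cdot|)^{-\beta}$, and this is exactly where the threshold $\beta>1/4$ enters. The delicate point is the genus two case, since the trivial variety then has codimension only one, making the vanishing of $\Lambda_{(s,t)}(\mathcal{T})$ borderline and sensitive to the precise Fourier-decay exponent and to the specific alignment of the frequency variables dictated by $(s,t)$.
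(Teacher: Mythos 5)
Your proposal is correct and tracks the paper's argument closely: part (a) via density of $\mathcal{V}$, the uniform bound from Lemma~\ref{lemma: constant for linear functional}, and Riesz representation; the support containment via positive distance to the closed configuration set; and $\Lambda_{(s,t)}(\mathcal{T})=0$ via a cutoff $f^\delta$ concentrated near the trivial variety, expanded on the Fourier side with $\hat f^\delta$ scaling as you describe, the resulting integral of $\hat\mu$-decay factors converging precisely because $4\beta>1$. The one organizational difference worth noting is that the paper does not split by genus at all: it always tests against the codimension-one set $\{x_1=y_1\}$ (which contains $\mathcal{T}$ regardless of genus), so the ``borderline'' case you flagged is in fact the only one it treats, closed via a Cauchy--Schwarz factorization and a case split on $\beta\lessgtr 1/2$.
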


\subsection{Proof of Lemma \ref{lemma: Fourier side of the limit}}
\begin{proof}
The proof proceeds by establishing absolute convergence of \eqref{epsilon lambda} then moves our expression \eqref{epsilon lambda} to the frequency side where we conclude with the dominated convergence theorem.

To establish absolute convergence, since $f\in\mathcal{V}$, the expression defining \eqref{epsilon lambda} is a continuous function integrated over a compact domain. 

To establish \eqref{Fourier version of config int}, absolute convergence allows manipulation as desired. 
Fourier inversion provides 
\begin{align*}
    \langle \Lambda_{(s,t)}^\epsilon,f \rangle &= \int\limits_{[0,1]^3}f(x_1,x_2,y_1)\int\limits_{\R^4}\prod_{i=1}^3\widehat{\mu_\epsilon} (\eta_i) \widehat{\mu_\epsilon}(\xi)e^{2\pi i \xi \left(\frac{sx_1 +(1-s)x_2-ty_1}{(1-t)} \right)}d\eta_1d\eta_2d\eta_3d\xi dx_1dx_2dy_1 \\
    &\le \int\limits_{\R^4} \prod_{i=1}^3\widehat{\mu_\epsilon}(\eta_i) \widehat{\mu_\epsilon}(\xi) \\
    & \quad \times \int\limits_{\R^3} f(x_1,x_2,y_1) e^{2\pi i(x_1,x_2,y_1)\cdot \left(\eta_1+\xi\frac{s}{1-t}, \eta_2+\xi \frac{1-s}{1-t}, \eta_3 - \xi \frac{t}{1-t}\right)}dx_1dx_2dy_1 d\xi d\eta_1d\eta_2 d\eta_3 \\
    &\le  \int\limits_{\R^4} \prod_{i=1}^3\widehat{\mu_\epsilon}(\eta_i) \widehat{\mu_\epsilon}(\xi) \hat{f} \left( -\eta_1-\frac{s}{1-t}\xi , -\eta_2 -\frac{1-s}{1-t}\xi , -\eta_3 + \frac{t}{1-t} \xi    \right)d\xi d\eta_1d\eta_2 d\eta_3.
\end{align*}
For all $\xi \in \R$ as $\epsilon\rightarrow 0$, we have $\hat{\mu}_\epsilon (\xi)\rightarrow \hat{\mu}(\xi)$; hence, the above integral converges pointwise to \eqref{Fourier version of config int}.
By the dominated convergence theorem, we can conclude if we show 
\begin{equation}\label{ft integral needed finite}
\int\limits_{\R^4} \left|\prod_{i=1}^3\widehat{\mu}(\eta_i) \widehat{\mu}(\xi) \hat{f} \left( -\eta_1-\frac{s}{1-t}\xi , -\eta_2 -\frac{1-s}{1-t}\xi , -\eta_3 + \frac{t}{1-t} \xi    \right)  \right|  d\xi d\eta_1d\eta_2 d\eta_3 <\infty.
\end{equation}

Toward showing \eqref{ft integral needed finite}, we take advantage of the compact support of $\hat{f}$ by first rewriting as
\begin{align*}
    \eqref{ft integral needed finite} &= \int\limits_\R |\hat{\mu}(\xi)| \mathcal{D}_\xi d\xi, \quad \quad  \text{ where } \\
    \mathcal{D}_\xi &:= \int\limits_{\R^3} \left|\prod_{i=1}^3\widehat{\mu}(\eta_i)\hat{f} \left( -\eta_1-\frac{s}{1-t}\xi , -\eta_2 -\frac{1-s}{1-t}\xi , -\eta_3 + \frac{t}{1-t} \xi    \right)\right|d\eta_1 d\eta_2 d\eta_3. 
\end{align*}
We claim that $\mathcal{D}_\xi$ obeys the following bound:
\begin{equation}\label{Dxi bound for finite fourier integral}
\mathcal{D}_\xi \le ||\hat{f}||_\infty C^3(\beta) (6R)^3  \left(1+ c |\xi|\right)^{-3\beta}
\end{equation}
where $R>0$ is a fixed constant given by the support of $\hat{f}$. 
Assuming \eqref{Dxi bound for finite fourier integral}, we conclude the proof as follows
\begin{align*}
    \int\limits_\R \left|\widehat{\mu} (\xi)\right|\mathcal{D}_\xi d\xi&\le ||\hat{f}||_\infty C^4(\beta) (6R)^3  \int\limits_\R (1+|\xi|)^{-\beta} \left(1+c |\xi|\right)^{-3\beta} d\xi \\
    &\le ||\hat{f}||_\infty C^4(\beta) (6R)^3  \int\limits_\R (1+ \min\{1,c\}|\xi|)^{-4\beta} d\xi <\infty
\end{align*}
because $4\beta>1$ by assumption.

To prove our claim, we show the desired bound for a given $\xi \in \R$. 
Recall that some fixed $R>0$ exists such that supp$(\hat{f}) \subset[-R,R]^3$ so that for a given $\xi$, $\mathcal{D}_\xi$ has the following domain of integration 
\[
D_\xi =\left\{( \eta_1, \eta_2 ,\eta_3)\in \R^3 : \left|-\eta_1-\frac{s}{1-t}\xi\right|\le R , \left|-\eta_2 -\frac{1-s}{1-t}\xi\right|\le R , \left|-\eta_3 + \frac{t}{1-t} \xi\right|\le R     \right\}
\]
which is contained in the following domain
\[
D_\xi \subset \left[  -R -\frac{s}{1-t}\xi , R - \frac{s}{1-t}\xi \right] \times 
\left[ -R -\frac{1-s}{1-t}\xi , R - \frac{1-s}{1-t}\xi    \right]\times 
\left[   -R + \frac{t}{1-t}\xi , R +\frac{t}{1-t}\xi \right]
\]
for each $\eta_1$, $\eta_2$ and $\eta_3$, respectively. 

We approximate $\mathcal{D}_\xi$ by 
\begin{equation}\label{split into three independent integrals}
\mathcal{D_\xi} \le ||\hat{f}||_\infty \int\limits_{-R-\frac{s}{1-t}\xi}^{R-\frac{s}{1-t}\xi}|\hat{\mu}(\eta_1)|d\eta_1 \int\limits_{-R-\frac{1-s}{1-t}\xi}^{R-\frac{1-s}{1-t}\xi}|\hat{\mu}(\eta_2)|d\eta_2 \int\limits_{-R+\frac{t}{1-t}\xi}^{R+\frac{t}{1-t}\xi}|\hat{\mu}(\eta_3)|d\eta_3.
\end{equation}
Using the Fourier decay of $\mu$ allows us to estimate for each $\eta_i$, $i=1,2,3$ and $c_1= \frac{s}{1-t}$, $c_2=\frac{1-s}{1-t}$ and $c_3=\frac{t}{1-t}$,
\begin{align}\label{bound on eta integrals}
\int\limits_{-R \pm c_i\xi}^{R \pm c_i\xi} |\hat{\mu}(\eta_i)|d\eta_i &\le C(\beta)
\begin{cases}
    \int\limits_{-3R}^{3R} d\eta_i & \text{ if } c_i|\xi|\le 2R  \\
    \int\limits_{-R \pm c_i\xi}^{R \pm c_i\xi} (1+c_i|\eta_i|)^{-\beta}d\eta_i & \text{ if } c_i|\xi|>2R
\end{cases} \notag \\
&\le C(\beta) (6R) (1+c_i|\xi|)^{-\beta} 
\end{align}
Plugging \eqref{bound on eta integrals} into \eqref{split into three independent integrals} for each $i$ results in 
\begin{equation}
\mathcal{D}_\xi \le ||\hat{f}||_\infty C^3(\beta) (6R)^3 (1+c_1|\xi|)^{-\beta}(1+c_2|\xi|)^{-\beta}(1+c_3|\xi|)^{-\beta}.
\end{equation}
We conclude by taking $c=\max\{c_1,c_2,c_3\}$, which allows for our desired bound
\[
\mathcal{D}_\xi \le ||\hat{f}||_\infty C^3(\beta) (6R)^3 (1+c |\xi|)^{-3\beta}. 
\]
\end{proof}

\subsection{Proof of Lemma \ref{lemma: constant for linear functional}}

\begin{proof}
Since the right side of \eqref{Statement towards a linear functional} follows from the left side, we only show the left side. 
We do this by again transferring to the frequency side of our expression and approximate. 

Again, we take advantage of absolute convergence and apply Fourier inversion as follows
\begin{align*}
    \left|\langle \Lambda_{(s,t)}^\epsilon , f \rangle\right| &= \left|\int\limits_{[0,1]^3} f(x_1,x_2,y_1)\mu_\epsilon(x_1)\mu_\epsilon(x_2)\mu_\epsilon(y_2)\mu_\epsilon\left( \frac{sx_1+(1-s)x_2-ty_1}{1-t} \right)dx_1dx_2dy_1 \right| \\
    &\le ||f||_\infty \int\limits_{[0,1]^3} \mu_\epsilon(x_1)\mu_\epsilon(x_2)\mu_\epsilon(y_2)\mu_\epsilon\left(\frac{sx_1+(1-s)x_2-ty_1}{1-t} \right)dx_1dx_2dy_1  \\
    &\le ||f||_\infty \int\limits_{\R^3} \mu_\epsilon(x_1)\mu_\epsilon(x_2)\mu_\epsilon(y_2)\int\limits_\R \widehat{\mu_\epsilon}(\xi)e^{2\pi i\xi \left(\frac{sx_1+(1-s)x_2-ty_1}{1-t} \right)}d\xi dx_1dx_2dy_1 \\
    &\le ||f||_\infty \int\limits_\R\widehat{\mu_\epsilon}(\xi) \widehat{\mu_\epsilon}\left(\frac{-s}{1-t}\xi\right)\widehat{\mu_\epsilon}\left(\frac{-(1-s)}{1-t}\xi\right)\widehat{\mu_\epsilon}\left(\frac{t}{1-t}\xi \right) d\xi  \\
    &\le ||f||_\infty \int\limits_\R\widehat{\mu}(\xi) \widehat{\mu}\left(\frac{-s}{1-t}\xi\right)\widehat{\mu}\left(\frac{-(1-s)}{1-t}\xi\right)\widehat{\mu}\left(\frac{t}{1-t}\xi \right) d\xi  \\
    &\le ||f||_\infty C^4(\beta)\int\limits_\R \left(1+\min\left\{1,\frac{s}{1-t} , \frac{1-s}{1-t},\frac{t}{1-t}\right\} |\xi|\right)^{-4\beta}d\xi < \infty. 
\end{align*}
The assumption $4\beta>1$ allows for convergence of the above integral, the constant is explicitly
\begin{equation}\label{the constant we integrate over}
    C'((s,t)) = C^4(\beta)\int\limits_\R (1+|\xi|)^{-\beta}\left(1+\frac{s}{1-t} |\xi|\right)^{-\beta}\left(1+ \frac{1-s}{1-t} |\xi|\right)^{-\beta}\left(1+ \frac{t}{1-t}|\xi|\right)^{-\beta}d\xi.
\end{equation}
\end{proof}

\subsection{Proof of Proposition \ref{prop: summary of measure properties} Part $(a)$}\label{section: functional piece of main prop}

\begin{proof}
To show that \eqref{epsilon lambda} holds for all $f\in C([0,1]^3)$, we approximate it via functions $g\in \mathcal{V}$.

Given some $f\in C([0,1]^3)$, fix $g\in \mathcal{V}$ such that $\norm{f-g}_\infty< \frac{\delta}{C'(s,t)}$ for $\delta>0$; this is possible by Lemma \ref{lemma:denseness of V}. 
Because $g\in \mathcal{V}$, $|\langle \Lambda_{(s,t)}^\epsilon -\Lambda_{(s,t)} , g \rangle|\le \delta $ for all $\epsilon$ small enough. 
Now, we estimate with \eqref{epsilon lambda}
\begin{align*}
    |\langle \Lambda_{(s,t)}^\epsilon -\Lambda_{(s,t)} , f \rangle| &\le |\langle \Lambda_{(s,t)}^\epsilon -\Lambda_{(s,t)} , g \rangle| +|\langle \Lambda_{(s,t)}^\epsilon -\Lambda_{(s,t)} , f- g \rangle| \\
    &\le |\langle \Lambda_{(s,t)}^\epsilon -\Lambda_{(s,t)} , g \rangle| +|\langle \Lambda_{(s,t)}^\epsilon , f- g \rangle| +|\langle \Lambda_{(s,t)}, f- g \rangle| \\
    &\le \delta +2 C'(s,t) \norm{f-g}_\infty  \le 3\delta
\end{align*}
which holds independent of $\epsilon \rightarrow 0$.

Establishing \eqref{epsilon lambda} as a functional over $C([0,1]^3)$ leads to the use of the Riesz representation theorem and allows us to conclude. 
To establish this, we first observe that the definition of $\Lambda_{(s,t)}$ allows \eqref{Statement towards a linear functional} to hold for all $f\in C([0,1]^3)$ so we show that $\{\langle\Lambda_{(s,t)}, f_n\rangle\}$ is a Cauchy sequence and converges to a limit denoted by $\langle \Lambda_{(s,t)},f\rangle$.

By Lemma \ref{lemma:denseness of V}, there exists a sequence of functions $\{f_n\}\subset \mathcal{V}$ such that $f_n\rightarrow f$ uniformly. 
For $n,m\in \N$, the function $f_n-f_m\in \mathcal{V}$, so that by Lemma \ref{lemma: constant for linear functional}, we obtain
\[
    |\langle  \Lambda_{(s,t)}^\epsilon , f_n-f_m   \rangle| \le C'(s,t) \norm{f_n-f_m}_\infty 
\]
and consequently 
\[
    |\langle  \Lambda_{(s,t)} , f_n-f_m   \rangle| \le C'(s,t) \norm{f_n-f_m}_\infty \rightarrow 0. 
\]
Thus, $\{\langle\Lambda_{(s,t)}, f_n\rangle\}$ is a Cauchy sequence and hence converges weakly to a limit denoted by $\langle \Lambda_{(s,t)}, f \rangle$. 
That the limit is independent of the approximating sequence, take another sequence $\{g_n\}\subset \mathcal{V}$ that converges to $f$ and observe that Lemma \ref{lemma: constant for linear functional} again gives
\[
    |\langle  \Lambda_{(s,t)} , f_n-g_n   \rangle| \le C'(s,t) \norm{f_n-g_n}_\infty\rightarrow 0.
\]
\end{proof}

\subsection{Proof of Proposition \ref{prop: summary of measure properties} Part $(b)$}\label{section: support condition proof}

\begin{proof}
We show the support conditions in order as they appear. 
For supp$(\Lambda_{(s,t)})\subset\mathcal{S}$, we show that any function $f\in C([0,1]^3)$ that is supported on a cube outside of $\mathcal{S}$. 
To show that this measure assigns a value of zero to $\mathcal{T}$, we consider when a specific integral vanishes. 

If assumed nontrivial, to show supp$(\Lambda_{(s,t)})\subset\mathcal{S}$, we proceed by showing that for any $w\in [0,1]^4\setminus \mathcal{S}(E,(s,1-s,t,1-t))$, we can find an $r>0$ such that for a closed cube of length $r$ centered around $u$,
\[
Q(w,r)\subset[0,1]^4\setminus \mathcal{S}_{(s,t)}(E) \
\text{and} \
\langle \Lambda_{(s,t)} , f\rangle =0 
\]
for any $f\in C([0,1]^3)$ with supp$(f)\subset Q(w,r)$. 

Fix $w\in  [0,1]^4\setminus \mathcal{S}$ such that at least one index $1\le i\le 4$ satisfies $w_i\not\in E$. 
Since $E$ is closed, some positive $r$ must exist so that for all $\epsilon<r$
\[
\text{dist}(w_i,E ) >2r \  \text{ and } \  \text{dist}(w_i, \mathcal{N}_\epsilon[E])>r. 
\]
Consequently $\overline{B(w_i,\frac{r}{2})}\cap \mathcal{N}_\epsilon[E]=\emptyset$ and $\mu_\epsilon\equiv0$ in the associated $w_i$ variable, without loss of generality, assume this happens for the variable $x$.
Furthermore, for any $f$ supported on $Q(w,r)$ and $\epsilon <r$, we must have  $f(x_1,x_2,y_1,y_2)\mu_\epsilon(x_1)=0$. 
Combining this with \eqref{epsilon lambda} results in $\langle \Lambda_{(s,t)}^\epsilon, f\rangle=0$ for all $\epsilon<r$ and hence $\langle \Lambda_{(s,t)},f\rangle=0$. 
We conclude that the measure $\Lambda_{(s,t)}$ is supported in $\mathcal{S}$. 


Moving toward the second statement about $\mathcal{T}$, we first observe that $\mathcal{T}$ depends on the coefficients of a given equation, but in all cases at least one pair of variables is equal. 
Therefore, it suffices to show that our measure $\Lambda_{(s,t)}$ vanishes on Schwartz functions supported on at least one of the following sets
\[
\{(x_1,x_2,y_1,y_2)\in [0,1]^4 \: \ x_1=y_1 \} \text{ or } \{(x_1,x_2,y_1,y_2)\in [0,1]^4 \: \ x_1=y_2 \} 
\]
and whose Fourier transform is compactly supported. 
Let $\chi : \R \rightarrow [0,\infty)$ be a nonnegative Schwartz function such that $\chi\ge 1$ on $[-1,1]$ and supp$( \hat{\chi})\subset [-R,R]$ for some positive finite $R$. 
The function we utilize is obtained by setting $f^\delta (x_1,x_2,y_1) = \chi(\delta^{-1}(x_1-y_1))$ and observing that $\Lambda_{(s,t)}(\{ (x_1,x_2,y_1,y_2)\in [0,1]^4 : x_1=y_1 \}) \le \langle   \Lambda_{(s,t)}, f^\delta    \rangle$; hence, it suffices to show that 
\[
\langle   \Lambda_{(s,t)}, f^\delta    \rangle \rightarrow 0 \text{ as } \delta \rightarrow 0 .
\]
We apply \eqref{epsilon lambda} to $f^\delta$, and for $\epsilon>0$, we use Fourier inversion to estimate
\begin{align*}
    \langle \Lambda_{(s,t)}^\epsilon , f^\delta \rangle &= \int\limits_{\R^3} \mu_\epsilon(x_1)\mu_\epsilon(x_2)\mu_\epsilon(y_1)f^\delta(x_1,x_2,y_1) \mu_\epsilon\left( \frac{sx_1+(1-s)x_2-ty_1}{1-t}  \right)dx_1dx_2dy_1\\
    &= \int\limits_{\R^2} \widehat{\mu_\epsilon}(\eta)\widehat{\mu_\epsilon}\left(-\frac{1-s}{1-t}\eta\right)\widehat{\mu_\epsilon}\left(    -\left(\frac{s}{1-t}|\eta|+\xi\right)\right)\widehat{\mu_\epsilon}\left(\frac{t}{1-t}\eta + \xi\right)\hat{\chi}(\delta^{-1}\xi) d\xi d\eta  \\ 
    &\le \int\limits_{\R^2} \left| \hat{\mu}(\eta)\hat{\mu}\left(-\frac{1-s}{1-t}\eta\right)\hat{\mu}\left(    -\left(\frac{s}{1-t}|\eta|+\xi\right)\right)\hat{\mu}\left(\frac{t}{1-t}\eta + \xi\right)\hat{\chi}(\delta^{-1}\xi)      \right| d\xi d\eta .
\end{align*}
Since this bound is independent of $\epsilon$, it provides a bound for $\langle \Lambda_{(s,t)} , f^\delta \rangle$. 
This is the integral we wish to show vanishes, so we introduce some notation for convenience
\[
\mathcal{J}(\delta) := \int\limits_{\R^2} \left| \hat{\mu}(\eta)\hat{\mu}\left(-\frac{1-s}{1-t}\eta\right)\hat{\mu}\left(    -\left(\frac{s}{1-t}|\eta|+\xi\right)\right)\hat{\mu}\left(\frac{t}{1-t}\eta + \xi\right)\hat{\chi}(\delta^{-1}\xi)\right| d\xi d\eta .
\]

Moving toward our next step, we begin our approximation of $\mathcal{J}$ by writing out definitions of $\hat{\chi}$ and exploiting that it has compact support 
\begin{align}\label{up until cs inequality}
    \mathcal{J}(\delta) &= \int\limits_\R \left|\hat{\mu}(\eta) \hat{\mu}\left(   -\frac{1-s}{1-t} \eta  \right) \right| \int\limits_\R \left|  \hat{\mu}\left(-\left(\frac{s}{1-t}\eta+\xi\right)\right)\hat{\mu}\left(\frac{t}{1-t}\eta + \xi\right)\hat{\chi}(\delta^{-1}\xi) \right| d\xi d\eta   \notag \\
    &\le \norm{\hat{\chi}}_\infty \delta  \int\limits_\R \left|\hat{\mu}(\eta) \hat{\mu}\left(-\frac{1-s}{1-t} \eta  \right) \right| \notag \\
    & \quad \times \int\limits_\R \left|  \hat{\mu}\left(\frac{-s}{1-t}\eta-\xi\right)\mathbf{1}_{\left[\frac{-R}{\delta} , \frac{R}{\delta}\right]}^2(\xi) \hat{\mu}\left(\frac{t}{1-t}\eta + \xi\right) \right| d\xi d\eta \notag \\
    &\lesssim \delta \int\limits_\R ( 1+|\eta|)^{-\beta} \left(  1 + \frac{1-s}{1-t} |\eta|  \right)^{-\beta} \notag \\ 
    & \quad \times \int\limits_\R \left(1+\left|\frac{s}{1-t}\eta+\xi\right|\right)^{-\beta} \mathbf{1}_{\left[\frac{-R}{\delta} , \frac{R}{\delta}\right]}^2 (\xi)  \left(1+\left|\frac{t}{1-t}\eta + \xi\right|\right)^{-\beta} d\xi d\eta \notag \\
    &\lesssim \delta  \int\limits_\R ( 1+|\eta|)^{-\beta} \left(  1 + \frac{1-s}{1-t} |\eta|  \right)^{-\beta} F_{\frac{s}{1-t}}(\eta) F_{\frac{t}{1-t}}(\eta) d\eta 
\end{align}
where 
\begin{multline*}
F_{\frac{s}{1-t}}(\eta) = \left[\int\limits_{-\frac{R}{\delta}}^{\frac{R}{\delta}} \left| \left(1+\left|\frac{s}{1-t}\eta+\xi\right|\right)\right|^{-2\beta}d\xi    \right]^{\frac{1}{2}} \text{ and }\\
F_{\frac{t}{1-t}}(\eta) = \left[\int\limits_{-\frac{R}{\delta}}^{\frac{R}{\delta}}\left( 1+  \left|\frac{t}{1-t}\eta + \xi\right)\right|^{-2\beta} d\xi   \right]^{\frac{1}{2}}.
\end{multline*}
Applying the Cauchy-Schwarz inequality to \eqref{up until cs inequality} results in 
\begin{equation}\label{J delta with norms}
\mathcal{J}(\delta) \le \delta \norm{(1+|\eta|)^{-\beta}\left(1+\frac{1-s}{1-t}|\eta|\right)^{-\beta}}_2  \norm{F_{\frac{s}{1-t}}(\eta)F_{\frac{1-s}{1-t}}(\eta)}_2. 
\end{equation}
The first term in the product leads to 
\begin{equation}\label{Bound that comes out of holder pnorm}
C_2=\norm{(1+|\eta|)^{-\beta}\left(1+\frac{1-s}{1-t}|\eta|\right)^{-\beta}}_2^2 \le \int\limits_\R (1+c|\eta|)^{-4\beta } d\eta <\infty 
\end{equation}
where $c= \max \{1, \frac{1-s}{1-t}\}$.
Recall that $4\beta>1$; hence, the integral converges and, in particular, is independent of $\delta$. 
We claim that $\norm{F_{\frac{s}{1-t}} F_{\frac{t}{1-t}}}_2$ obeys the following bound
\begin{multline}\label{FF q norm bound}
\norm{F_{\frac{s}{1-t}}(\eta) F_{\frac{t}{1-t}}(\eta)}_2 \le \frac{4R}{\delta^{1/2}} \text{max}\left\{1 , \left(1+\frac{3R}{\delta}\right)^{1-2\beta}\log\left(1+\frac{3R}{\delta}\right) \right\}  \\ 
+ \frac{2R}{(8\beta -1)^{\frac{1}{2}} \delta} \left(1+\gamma \left|\frac{2R}{\delta}\right|\right)^{\frac{1}{2} -2\beta} .
\end{multline}

Assuming that \eqref{FF q norm bound} holds and plugging this along with \eqref{Bound that comes out of holder pnorm} into \eqref{J delta with norms}, the argument ends by 
\begin{align*}
    \mathcal{J}(\delta) &\lesssim \delta  C_2 \frac{4R}{\delta^{1/q}} \text{max}\left\{1 , \left(1+\frac{3R}{\delta}\right)^{1-2\beta}\log\left(1+\frac{3R}{\delta}\right) \right\} + \frac{2R}{(8\beta -1)^{\frac{1}{q}} \delta} \left(1+\gamma \left|\frac{2R}{\delta}\right|\right)^{\frac{1}{q} -2\beta} \\
    &\lesssim \delta \begin{cases}
        \frac{4R }{\delta^{\frac{1}{q}}} + \frac{3R}{(8\beta -1)^{\frac{1}{q}} \delta} (1+\gamma|\frac{2R}{\delta}|)^{\frac{1}{q} - 2\beta} & \text{ if } \frac{1}{2} < \beta \le 1 \\
        \frac{4R}{\delta } (1+ \frac{3R}{\delta })^{1-2\beta}\log(1+\frac{3R}{\delta}) + \frac{2R}{\delta (8\beta -1)^{\frac{1}{q}}}  (1+ \gamma |\frac{2R}{\delta}|)^{\frac{1}{q} - 2\beta} & \frac{1}{4} < \beta \le \frac{1}{2}
    \end{cases} \\
    &\le \begin{cases}
        4R \delta^{1-\frac{1}{q}} + \frac{3R}{(2q\beta -1)^{\frac{1}{q}}} (1+\gamma |\frac{2R}{\delta}|)^{\frac{1}{q} - 2\beta } & \text{ if } \frac{1}{2} < \beta \le 1 \\ 
        4R \delta^{2\beta -\frac{1}{q}} (\delta +3R)^{1-2\beta} \log (1+\frac{3R}{\delta}) + 
        \frac{2R}{(8\beta -1)^{\frac{1}{q}}} 
        \delta^{2\beta -1 }(\delta+\gamma 2R)^{\frac{1}{q} - 2\beta}  & \frac{1}{4} < \beta \le \frac{1}{2}
    \end{cases}.
\end{align*}
In any case, $\delta$ is raised to a positive power. 
Hence, $\mathcal{J}(\delta)\rightarrow 0$ as $\delta\rightarrow 0$ as desired.

All we must do is now show that \eqref{FF q norm bound} holds. 
We first obtain an estimate for $F_{\frac{s}{1-t}}$ and $F_{\frac{t}{1-t}}$. 
Both terms obey a similar bound as follows, but we only show one
\begin{align*}
    [F_{\frac{s}{1-t}}(\eta)]^2 &= \int\limits_{\frac{-R}{\delta}}^{\frac{R}{\delta}} \left(1+\left| \frac{s}{1-t}\eta + \xi   \right|\right)^{-2\beta} d\xi  \\
    &\le \begin{cases}
        \int\limits_{\frac{-3R}{\delta}}^{\frac{3R}{\delta}} (1+|w|)^{-2\beta}dw & \text{ if } \frac{s}{1-t}|\eta|\le \frac{2R}{\delta} \\
        \int\limits_{\frac{-R}{\delta}}^{\frac{R}{\delta}} \left(1+\frac{s}{2(1-t)}|\eta|\right)^{-2\beta} d\xi & \text{ if } \frac{s}{1-t}|\eta| > \frac{2R}{\delta}
    \end{cases} \\
    &\le \begin{cases}
        \text{max}\{1 , (1+\frac{3R}{\delta})^{1-2\beta}\log(1+\frac{3R}{\delta}) \} & \text{ if } \frac{s}{1-t}|\eta|\le \frac{2R}{\delta} \\
        \frac{2R}{\delta } (1+\frac{s}{2(1-t)}|\eta|)^{-2\beta} & \text{ if } \frac{s}{1-t}|\eta| > \frac{2R}{\delta}
    \end{cases}.
\end{align*}
Using the above bound, we approximate $\norm{F_{\frac{s}{1-t}} F_{\frac{t}{1-t}}}_2$ as 
\begin{align*}
    \norm{F_{\frac{s}{1-t}}(\eta) F_{\frac{t}{1-t}}(\eta)}_2^2 &= \int\limits_{|\eta | \le \frac{2R}{\delta}} [F_{\frac{s}{1-t}}(\eta) F_{\frac{t}{1-t}}(\eta)]^2 d\eta + \int\limits_{\frac{2R}{\delta}< |\eta|} [F_{\frac{s}{1-t}}(\eta) F_{\frac{t}{1-t}}(\eta)]^2 d\eta  \\
    &\le \frac{4R}{\delta} \left[\text{max}\left\{1 , \left(1+\frac{3R}{\delta}\right)^{1-2\beta}\log\left(1+\frac{3R}{\delta}\right) \right\}\right]^4 \\
    & \quad + \frac{2R}{(8\beta -1)^{\frac{1}{4}}\delta} \left(1 + \gamma \left|\frac{2R}{\delta}\right|\right)^{\frac{1}{4}-2\beta} 
\end{align*}
where $\gamma =$min$\{\frac{s}{1-t}, \frac{t}{1-t}\}$. 
Finally,
\begin{multline*}
\norm{F_{\frac{s}{1-t}}(\eta) F_{\frac{t}{1-t}}(\eta)}_2 \le \frac{4R}{\delta^{1/2}} \text{max}\left\{1 , \left(1+\frac{3R}{\delta}\right)^{1-2\beta}\log\left(1+\frac{3R}{\delta}\right) \right\} \\
+ \frac{2R}{(8\beta -1)^{\frac{1}{2}} \delta} \left(1+\gamma \left|\frac{2R}{\delta}\right|\right)^{\frac{1}{2} -2\beta} .
\end{multline*}
\end{proof}

\subsection{Denseness of $\mathcal{V}$ in $C([0,1]^3)$}\label{section: denseness}

Note that our application of this Lemma is for $v=3$. 

\begin{lemma}\label{lemma:denseness of V}
Let $v\in \N$. 
The class of Schwartz functions $\mathcal{V}$ that map $\R^v$ to $\C$ with compact Fourier support is dense in $C([0,1]^v)$.
Furthermore, for any $f\in C([0,1]^v)$, there exists a sequence of functions in $\mathcal{V}$ that is uniformly convergent to $f$ on $[0,1]^v$.
\end{lemma}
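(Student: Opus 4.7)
My plan is to approximate $f$ by convolving a continuous extension of $f$ with a band-limited mollifier. The construction proceeds in three steps, and most of the work is bookkeeping once the right mollifier is chosen.

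First I would extend $f\in C([0,1]^v)$ to a continuous, compactly supported function $\tilde f$ on $\R^v$. This can be done either by Tietze extension followed by multiplication by a continuous cutoff, or more concretely by defining $\tilde f(x)=f(x)\,\eta(x)$ after extending $f$ continuously to a slightly larger cube and taking $\eta\in C_c(\R^v)$ equal to $1$ on $[0,1]^v$. The extension $\tilde f$ is uniformly continuous on $\R^v$, which will be needed for the standard approximation-to-the-identity argument.

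Next I would build a band-limited mollifier. Choose $\widehat h\in C_c^{\infty}(\R^v)$ with $\widehat h(0)=1$; by Fourier inversion $h$ is Schwartz and $\int h=1$. Set $h_{\epsilon}(x)=\epsilon^{-v}h(x/\epsilon)$, so $\widehat{h_{\epsilon}}(\xi)=\widehat h(\epsilon\xi)$ is smooth and compactly supported. Define the approximants
\begin{equation*}
f_{\epsilon}:=\tilde f * h_{\epsilon}.
\end{equation*}
I need to check that $f_{\epsilon}\in\mathcal V$. The Fourier transform satisfies $\widehat{f_{\epsilon}}=\widehat{\tilde f}\cdot\widehat{h_{\epsilon}}$, and since $\widehat{h_{\epsilon}}$ has compact support, so does $\widehat{f_{\epsilon}}$. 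For the Schwartz condition, note that $\partial^{\alpha}f_{\epsilon}=\tilde f * \partial^{\alpha}h_{\epsilon}$; since $\tilde f$ has compact support $K$ and $\partial^{\alpha}h_{\epsilon}$ is Schwartz, for $|x|$ large and $y\in K$ one has $|x-y|\gtrsim|x|$, which gives $|\partial^{\alpha}f_{\epsilon}(x)|\le C_N(1+|x|)^{-N}$ for every $N$. Thus $f_{\epsilon}\in\mathcal V$.

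Finally I would show uniform convergence. Using $\int h_{\epsilon}=1$,
\begin{equation*}
|f_{\epsilon}(x)-\tilde f(x)|\le\int_{\R^v}|\tilde f(x-y)-\tilde f(x)|\,|h_{\epsilon}(y)|\,dy,
\end{equation*}
and splitting the integral into $|y|\le\delta$ and $|y|>\delta$ and using the uniform continuity of $\tilde f$ together with the rapid decay of $h$ (so that $\int_{|y|>\delta}|h_{\epsilon}|\to 0$ as $\epsilon\to 0$ for any fixed $\delta$) yields $\|f_{\epsilon}-\tilde f\|_{\infty}\to 0$. Restricting to $[0,1]^v$, where $\tilde f=f$, completes the proof, and taking $\epsilon_n=1/n$ produces the required sequence. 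The only mildly nonroutine point is confirming the Schwartz property of $f_\epsilon$ despite $\tilde f$ being merely continuous, which is handled by the compact support of $\tilde f$ as indicated above.
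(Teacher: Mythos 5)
Your proposal is correct and follows essentially the same route as the paper: extend $f$ to a continuous compactly supported function on $\R^v$, convolve with a band-limited approximate identity whose Fourier transform is in $C_c^\infty$, observe that the convolution has compact Fourier support, and conclude by uniform convergence of the mollification. You are a bit more careful in verifying the Schwartz property of $f_\epsilon$ (which the paper asserts without justification) and in using the correct $\epsilon^{-v}$ normalization, but these are refinements of the same argument, not a different approach.
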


\begin{proof}
Our proof strategy is to show that continuous functions on $\R^v$ of compact support can be approximated uniformly on compact sets with functions in $\mathcal{V}$. 
This gives us our desired result for any $f\in C([0,1]^v)$ because any such $f$ has a continuous extension to some $f_c\in C_C(\R^v)$. 

Let $\epsilon >0$, $\phi\in \mathcal{V}$ so that $\hat{\phi}(0)=1$ and $\hat{\phi}$ is smooth and compactly supported. 
We define 
\[
\phi_\epsilon:= \epsilon^{-1} \phi\left(\frac{x}{\epsilon} \right)
\]
which is an approximation to the identity as $\epsilon\rightarrow 0$. 
Observe that $\{f_c * \phi_\epsilon \}$ is a family of functions that converges to $f_c$ as $\epsilon\rightarrow 0$. 
Furthermore, $\widehat{f_c * \phi_\epsilon} = \widehat{f_c}\widehat{\phi_\epsilon}$, which is compactly supported because $\widehat\phi_\epsilon$ has compact support. 
Hence, $\{f_c * \phi_\epsilon \}$ is a family of functions contained in $\mathcal{V}$. 

Finally, because $f_c\in L^\infty$, $\{f_c * \phi_\epsilon \}$ converges uniformly to $f_c$ on compact sets. 
In particular, on $[0,1]^v$, we obtain that $\{f_c * \phi_\epsilon \}$ converges uniformly to $f_c\equiv f$ as desired. 
\end{proof}

\section{Proof of Theorem \ref{theorem: containment result}}\label{section: main theorem proof}

\subsection{Proof of Theorem \ref{theorem: containment result}}

\begin{proof}
The proof proceeds by contradiction; that is, we assume $E$ contains no nontrivial solutions to any equation of the form $f$. 
Using the given coefficients $\{a,b,c,d\}$, we can set 
\[
s= \frac{a}{a+b} \text{ and } t= \frac{c}{c+d}.
\]
Thus, our assumption is equivalent to $E$ avoiding solutions to any equation of the form 
\[
f(x_1,x_2;y_1,y_2) = sx_1 +(1-s)x_2 - ty_1 - (1-t)y_2 \text{ where } (s,t) \in (0,1)^2 \cap \Q^2.
\]
The contradiction will follow by first showing that this assumption dictates all the measures constructed in the previous section must be equivalent to zero, but by a computation using definitions, we show that this cannot happen.

Take $f\in \mathcal{V}$ and any pair $(s,t)\in(0,1)^2$ and then define the following function 
\begin{align*}
    F^{[f]}(s,t)&:= \langle \Lambda_{(s,t)},f \rangle \\
    & = \int\limits_{[0,1]^4}\hat{\mu}(\xi) \prod_{i=1}^3\hat{\mu}(\eta_i) \hat{f} \left(  -\eta_1 -\frac{s}{1-t}\xi , -\eta_2 - \frac{(1-s)}{1-t} \xi , -\eta_3 + \frac{t}{1-t}\xi  \right)d\xi d\eta_1 d\eta_2 d\eta_3. 
\end{align*}
Note that $(s,t)\mapsto F^{[f]}(s,t)$ is a continuous function with proof in the following subsection. 
By assuming that $E$ avoids all nontrivial solutions to $f$, it must be that $\Lambda_{(s,t)}\equiv 0$ for all $(s,t)\in (0,1)^2\cap \Q^2$ and that $F^{[f]}(s,t)=0$ for every $(s,t)\in(0,1)^2\cap \Q^2$. 
Furthermore, since this function is continuous we conclude that $F^{[f]}(s,t)=0$ for all $(s,t)\in(0,1)^2$ and all $f\in \mathcal{V}$.
Finally, since $\mathcal{V}$ is dense in $C([0,1]^3)$ with the sup norm, \eqref{epsilon lambda} establishes for all $(s,t)\in(0,1)^2$ and all $f\in C([0,1]^3)$ that $\langle\Lambda_{(s,t)},f\rangle =0$.

Now, we compute the mass of $\mu_\epsilon$ for $\epsilon>0$. 
Note that in what follows, we use the change of variables $v\mapsto s$ with $v= [sx_1 +(1-s)x_2-ty_1]/[1-t]$ in the first inequality 
\begin{align*}
    1&= \left[\int\limits_0^1 \mu_\epsilon(z)dz  \right]^4  = \int\limits_{[0,1]^4} \mu_\epsilon(x_1)\mu_\epsilon(x_2)\mu_\epsilon(y_1)\mu_\epsilon(y_2) dy_2 dx_1dx_2dy_1 \\
    &=4! \int\limits_{\{(x_1,x_2,y_1,y_2)\in[0,1]^4\ : \ x_1<y_1<y_2<x_2\}}  \mu_\epsilon(x_1)\mu_\epsilon(x_2)\mu_\epsilon(y_1)\mu_\epsilon(y_2)dy_2dx_1dx_2dy_1 \\
    &\le 4! \int\limits_{\{(x_1,x_2,y_1)\in[0,1]^3\ :\ x_1<y_1<x_2\}} \int\limits_0^1 \mu_\epsilon(x_1)\mu_\epsilon(x_2)\mu_\epsilon(y_1)\mu_\epsilon\left( \frac{sx_1+(1-s)x_2-ty_1}{1-t}\right)  \left|  \frac{x_1-x_2}{1-t}  \right| ds dx_1dx_2dy_1 \\
    &\le \frac{4!}{1-t}\int\limits_0^1 \int\limits_{[0,1]^3} \mu_\epsilon(x_1)\mu_\epsilon(x_2)\mu_\epsilon(y_1)\mu_\epsilon\left( \frac{sx_1+(1-s)x_2-ty_1}{1-t}\right)  \left|  x_1-x_2  \right|  dx_1dx_2dy_1  ds\\
    &= \frac{4!}{1-t} \int\limits_0^1 \langle \Lambda_{(s,t)}^\epsilon,f_0(x_1,x_2,y_1) \rangle ds
\end{align*}
where $f_0(x_1,x_2,y_1)= |x_1-x_2|$. 
Note that $f_0$ is a continuous function of three variables but is constant in one of them.

The above manipulations lead us to 
\[
1\le \frac{4!}{1-t} \int\limits_0^1 \langle \Lambda_{(s,t)}^\epsilon,|x_1-x_2| \rangle ds
\]
taking a limit as $\epsilon \rightarrow 0$ provides 
\[
1\le\lim_{\epsilon \rightarrow 0} \frac{4!}{1-t} \int\limits_0^1 \langle \Lambda_{(s,t)}^\epsilon, |x_1-x_2| \rangle ds 
\]
To compute the above limit, we wish to invoke the dominated convergence theorem to show that 
\[
\lim_{\epsilon \rightarrow 0} \int\limits_0^1 \langle \Lambda_{(s,t)}^\epsilon , f_0\rangle ds = \int\limits_0^1 \lim_{\epsilon \rightarrow 0} \langle \Lambda_{(s,t)}^\epsilon , f_0\rangle ds = \int\limits_{0}^1 \langle \Lambda_{(s,t)} , f_0\rangle ds.
\]
Recall from proposition \ref{prop: summary of measure properties} part $(a)$ that 
\[
\left|\langle \Lambda_{(s,t)}^\epsilon, f  \right| \le C'(s,t) ||f||_\infty 
\]
and so the application of the dominated convergence theorem holds provided we show 
\[
\int\limits_0^1 C'(s,t) ds < \infty 
\]
Finiteness comes down to several computations and is proven in Proposition \ref{prop: DCT requirement} in subsection \ref{DCT verification}. 
Assuming the finiteness of this integral for now, the dominated convergence theorem gives 
\[
1\le \lim_{\epsilon \rightarrow 0} \frac{4!}{1-t} \int\limits_0^1 \langle \Lambda_{(s,t)}^\epsilon,|x_1-x_2| \rangle ds = \frac{4!}{1-t} \int\limits_0^1 \langle \Lambda_{(s,t)},|x_1-x_2| \rangle ds.
\]
However, this leads to a contradiction because, by our assumption, the integrand must be zero, which is absurd. 
\end{proof}

\subsubsection{Proof of Continuity of $F^{[f]}$}
\begin{proof}
For continuity, we show that for a fixed $f\in \mathcal{V}$ and fixed $(s,t)\in (0,1)^2\cap \Q^2$, the function
\begin{equation}\label{continuity function}
F^{[f]}(s,t) := \int\limits_{\R^4} \hat{\mu}(\xi) \prod_{i=1}^3 \hat{\mu}(\eta_i) \hat{f}(-\eta_1 - \frac{s}{1-t}\xi,-\eta_2 - \frac{1-s}{1-t}\xi, -\eta_3 + \frac{t}{1-t}\xi )d\xi d\eta_1d\eta_2d\eta_3
\end{equation}
obeys for any $\epsilon>0$, we can find a $\delta>0$ such that whenever $|(s,t)-(s',t')|<\delta$
\[
\left|  F^{[f]}(s,t)-F^{[f]}(s',t')  \right| <\epsilon.
\]
To this end, we proceed by showing that \eqref{continuity function} is finite independent of how $(s',t')\rightarrow (s,t)$ via the dominated convergence theorem.

As $(s',t')$ approaches $(s,t)$, we obtain pointwise convergence of $F^{[f]}$.
To show that \eqref{continuity function} is finite, recall the notation we had previously 
\[
\mathcal{D}_\xi = \int\limits_{\R^3}\prod_{i=1}^3 \hat{\mu}(\eta_i) \hat{f}(-\eta_1 - \frac{s}{1-t}\xi,-\eta_2 - \frac{1-s}{1-t}\xi, -\eta_3 + \frac{t}{1-t}\xi ) d\eta_1d\eta_2d\eta_3.
\]
Additionally, the bound \eqref{Dxi bound for finite fourier integral} given by 
\[
\mathcal{D}_\xi \le \norm{\hat{f}}_\infty C^3(\beta) (6R)^3 (1+c|\xi|)^{-3\beta} 
\]
is dependent upon $\beta$, $\hat{f}$ and $(s,t)$.
In particular, $c$ is dependent on $(s,t)$.
To bypass the dependence on $(s,t)$, fix a box such that $(s,t)\in [a,b]^2 \subset (0,1)^2$. 
In $[a,b]^2$, redefining $c$ by $c=\text{min}\{\frac{a}{1-a}, \frac{1-b}{1-a}\}$, one obtains
\[
D_\xi \le \norm{\hat{f}}_\infty C^3(\beta) \left(\frac{(6R)^3}{\text{min}\{\frac{a}{1-a}, \frac{1-b}{1-a}\}}\right) (1+|\xi|)^{-3\beta} .
\]
Consequently
\[
\int\limits_\R \hat{\mu}(\xi) D_\xi d\xi \le \norm{\hat{f}}_\infty C^4(\beta) \left(\frac{(6R)^3}{\text{min}\{\frac{a}{1-a}, \frac{1-b}{1-a}\}}\right) \int\limits_\R (1+|\xi|)^{-4\beta}d\xi  
\]
which is finite because $4\beta>1$. 
Thus, the dominated convergence theorem applies and continuity follows with $\delta\le |b-a|$.
\end{proof}

\subsubsection{Verification of Requirements for the dominated convergence theorem in Theorem \ref{theorem: containment result}}\label{DCT verification}

In the proof of Theorem \ref{theorem: containment result}, we use the following fact, which we now verify. 

\begin{proposition}\label{prop: DCT requirement}
The expression 
\begin{equation}\label{expression: integral of constant in terms of s}
\int\limits_0^1 C'(s,t) ds
\end{equation}
is finite.
Recall that $C'(s,t)$ is given in \eqref{the constant we integrate over} and explicitly is
\[
C'((s,t)) \approx \int\limits_\R (1+|\xi|)^{-\beta}\left(1+\frac{s}{1-t} |\xi|\right)^{-\beta}\left(1+ \frac{1-s}{1-t} |\xi|\right)^{-\beta}\left(1+ \frac{t}{1-t}|\xi|\right)^{-\beta}d\xi
\]
and that $t\in (0,1/2)$ is a fixed number while $\beta\in (1/4,1/2)$.
\end{proposition}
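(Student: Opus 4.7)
The plan is to swap the order of integration via Tonelli's theorem (valid since all integrands are nonnegative), absorb the fixed-in-$t$ factor into $(1+|\xi|)^{-\beta}$, perform the resulting $s$-integral to extract a further $(1+|\xi|)^{-2\beta}$, and reduce the problem to convergence of $\int_\R (1+|\xi|)^{-4\beta}\,d\xi$, which holds because $4\beta>1$ by hypothesis.

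First, since $t\in(0,1/2)$ is fixed, $c_t := t/(1-t)$ is a fixed positive constant, so $(1+c_t|\xi|)^{-\beta} \le C_t(1+|\xi|)^{-\beta}$ for some $C_t$ depending only on $t$. Pulling this factor out of the $\xi$-integral defining $C'(s,t)$ and applying Tonelli yields
\[
\int_0^1 C'(s,t)\,ds \;\lesssim_t\; \int_\R (1+|\xi|)^{-2\beta}\, I(\xi)\, d\xi, \qquad I(\xi) := \int_0^1 \bigl(1 + \tfrac{s}{1-t}|\xi|\bigr)^{-\beta}\bigl(1 + \tfrac{1-s}{1-t}|\xi|\bigr)^{-\beta} ds.
\]
The remaining task is then to prove the uniform bound $I(\xi) \lesssim_t (1+|\xi|)^{-2\beta}$; granted this, the proposition follows immediately, since $\int_\R (1+|\xi|)^{-4\beta}\,d\xi$ is finite precisely because $\beta>1/4$.

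The estimate on $I(\xi)$ is where I expect the only real work. Setting $A := |\xi|/(1-t)$ and using the symmetry $s \leftrightarrow 1-s$, I will write $I(\xi) = 2\int_0^{1/2} (1+sA)^{-\beta}(1+(1-s)A)^{-\beta}\,ds$, observe that on $[0,1/2]$ the factor $(1+(1-s)A)^{-\beta} \le (1+A/2)^{-\beta}$ can be pulled outside, and evaluate the remaining $\int_0^{1/2} (1+sA)^{-\beta}\,ds$ in closed form as $[(1+A/2)^{1-\beta}-1]/[A(1-\beta)]$. Splitting into the cases $A \lesssim 1$ (where the integrand is bounded by $1$) and $A \gg 1$ (where the closed form is $\asymp A^{-\beta}$) produces a uniform bound of $(1+A)^{-\beta}$, so $I(\xi) \lesssim (1+A/2)^{-\beta}(1+A)^{-\beta} \lesssim_t (1+|\xi|)^{-2\beta}$. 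The rest of the argument is bookkeeping: the single obstacle is the one-dimensional $s$-integral estimate, which is routine once the symmetry trick and the case split are in place.
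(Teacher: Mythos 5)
Your proof is correct and follows a genuinely different and noticeably cleaner route than the paper. The paper does the $\xi$-integral first: for fixed $s$ it decomposes the frequency line into the three regions $R_1 = [0,\tfrac{1-t}{1-s})$, $R_2 = [\tfrac{1-t}{1-s},\tfrac{1-t}{s})$, $R_3 = [\tfrac{1-t}{s},\infty)$ where the number of ``active'' decaying factors changes, evaluates the $\xi$-integral on each piece using \eqref{main fact}, and then integrates the result in $s$ over $[0,1/2]$; this requires three separate lemmas, and the $R_2$ piece forces a further case split on $\beta$ (namely $\beta<1/3$, $\beta=1/3$, $\beta>1/3$) because of a logarithmic borderline. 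You instead apply Tonelli to do the $s$-integral first: you pull out the fixed $(1+\tfrac{t}{1-t}|\xi|)^{-\beta}\lesssim_t(1+|\xi|)^{-\beta}$ factor, then bound $I(\xi)=\int_0^1(1+\tfrac{s}{1-t}|\xi|)^{-\beta}(1+\tfrac{1-s}{1-t}|\xi|)^{-\beta}\,ds$ directly by symmetry, pulling $(1+(1-s)A)^{-\beta}\le(1+A/2)^{-\beta}$ outside on $[0,1/2]$ and computing $\int_0^{1/2}(1+sA)^{-\beta}ds$ in closed form. Your case split on $A\lesssim 1$ versus $A\gg 1$ correctly gives $I(\xi)\lesssim_{t,\beta}(1+|\xi|)^{-2\beta}$ uniformly, reducing everything to $\int_\R(1+|\xi|)^{-4\beta}d\xi<\infty$. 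What your approach buys is that the $s$-integral bound is a single elementary estimate valid for all $\beta\in(0,1)$: there is no interaction between the region decomposition and the exponent, so the $\beta=1/3$ borderline that the paper must treat separately never appears, and you need one computation rather than three lemmas. What the paper's approach buys, arguably, is explicit constants tracking the $t$-dependence region by region, but for the stated purpose (finiteness of the $s$-integral for a fixed $t$) your argument suffices and is substantially shorter.
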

The computations involved in this proof and what follows make heavy use of the following fact for $\alpha>0$
\begin{equation}\label{main fact}
\frac{1}{1+\alpha |\xi|} \approx \begin{cases}
    1 & |\xi| < \frac{1}{\alpha} \\
    \frac{1}{\alpha |\xi|} & |\xi|\ge \frac{1}{\alpha}.
\end{cases}
\end{equation}
We apply this fact with $\alpha = s/(1-t), \  (1-s)/(1-t)$ and $t/(1-t)$. 

\begin{proof}
The proof follows by showing a decomposition of \eqref{expression: integral of constant in terms of s}. 
This decomposition results in three summands, and the finiteness of each of these summands is proved sequentially in the Lemmas that follow. 

Our first reduction is that $C'(s,t)$ is symmetric in $s$, so \eqref{expression: integral of constant in terms of s} can be rewritten as follows
\[
\int\limits_0^1 C'(s,t) ds = 2\int\limits_0^{1/2} C'(s,t)ds.
\]

The next reduction comes from applying \eqref{main fact} to $C'(s,t)$. 
That is, $C'(s,t)$ is an integral in $\xi$; thus for a fixed $s$ and $t$, $|\xi|$ falls into one of the following intervals
\[
R_1 \bigcup R_2 \bigcup R_3 = \left[0,\frac{1-t}{1-s}\right)  \bigcup \left[\frac{1-t}{1-s}, \frac{1-t}{s}\right)  \bigcup \left[\frac{1-t}{s},\infty\right). 
\]
Consequently, the integrand of $C'(s,t)$, which we denote as $I(\xi,s,t)$, changes based on where $|\xi|$ falls  with respect to $R_1$, $R_2$ and $R_3$
\begin{equation}\label{decomposition of C(s,t)}
I(\xi,s,t) \approx
\begin{cases}
    |\xi|^{-2\beta}\left( \frac{1-t}{t} \right)^{-\beta}  &  |\xi|\in R_1 \\
    |\xi|^{-3\beta}\left( \frac{1-t}{t} \right)^{-\beta} \left( \frac{1-s}{t} \right)^{-\beta}  & |\xi|\in R_2 \\
    |\xi|^{-4\beta}\left( \frac{1-t}{t} \right)^{-\beta} \left( \frac{1-s}{t} \right)^{-\beta} \left( \frac{s}{t} \right)^{-\beta}  & |\xi|\in R_3.
\end{cases}
\end{equation}
This allows us to rewrite \eqref{expression: integral of constant in terms of s} as
\begin{equation}\label{expression: summand of Rs}
\int\limits_0^1 C'(s,t) ds = 2\int\limits_0^{1/2}C'(s,t)ds \lesssim \sum_{j=1}^3 \int\limits_0^{1/2} \int\limits_{R_j} I(\xi,s,t) d\xi ds.
\end{equation}
Our goal becomes showing that each of the summands above is finite. 

Below, Lemma \ref{lemma: R1 summand} shows finiteness of the summand involving $R_1$, Lemma \ref{lemma: R2 summand} addresses with the summand with $R_2$ and Lemma \ref{lemma: R3 summand} address the finiteness of the summand with $R_3$.
Under these assumptions, the proof is finished. 
\end{proof}


\begin{lemma}\label{lemma: R1 summand}
From the expression \eqref{expression: summand of Rs}, the summand involving $R_1$, displayed below
\[
\int\limits_0^{1/2} \int\limits_{R_1} I(\xi,s,t)d\xi ds
\]
is finite. 
Recall that $t\in(0,1/2)$ is fixed, $R_1 = [0, (1-t)/(1-s))$ and that $\beta\in (1/4,1/2]$. 
\end{lemma}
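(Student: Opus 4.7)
The strategy is to substitute the approximation for $I(\xi,s,t)$ on $R_1$ supplied by \eqref{decomposition of C(s,t)}, pull out the $s$-independent constants, and evaluate the resulting iterated integral directly. Since $t \in (0,1/2)$ is fixed, the factor $\bigl(\frac{1-t}{t}\bigr)^{-\beta}$ is a harmless constant and may be extracted. What remains to show is finiteness of
\[
\int_0^{1/2} \int_0^{(1-t)/(1-s)} |\xi|^{-2\beta}\, d\xi\, ds.
\]

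The key observation that makes the $\xi$-integral tractable is that the upper limit $(1-t)/(1-s)$ is uniformly bounded for $s \in [0, 1/2]$ (indeed it does not exceed $2(1-t) \leq 2$). The inner integral is a standard power-law, and because $\beta \in (1/4, 1/2)$ we have $-2\beta > -1$, so $|\xi|^{-2\beta}$ is integrable at the origin. Explicitly,
\[
\int_0^{(1-t)/(1-s)} |\xi|^{-2\beta}\, d\xi = \frac{1}{1-2\beta} \left(\frac{1-t}{1-s}\right)^{1-2\beta}.
\]

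Next I would evaluate the outer integral in $s$. The dependence on $s$ is through the factor $(1-s)^{-(1-2\beta)}$. On $[0, 1/2]$ we have $1 - s \in [1/2, 1]$, so this factor is uniformly bounded by $2^{1-2\beta}$; hence $\int_0^{1/2}(1-s)^{-(1-2\beta)}\, ds < \infty$. Collecting constants, all of which depend only on the fixed $t$ and $\beta$, yields the desired finiteness.

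The only point requiring care is the endpoint $\beta = 1/2$ appearing in the lemma statement, where $|\xi|^{-2\beta} = |\xi|^{-1}$ fails to be integrable at zero. This apparent obstacle is an artifact of the approximation, not of the true integrand: each factor in $I(\xi,s,t)$ is bounded by $1$, so trivially $I(\xi,s,t) \leq (1+|\xi|)^{-\beta}$, and this bound is integrable on the bounded interval $[0, (1-t)/(1-s)] \subset [0,2]$ even at $\beta = 1/2$. So if the endpoint is included, one simply replaces the approximation by this cruder pointwise bound on a neighborhood of $\xi = 0$, and the rest of the argument goes through unchanged.
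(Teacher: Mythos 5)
Your argument is correct, and for $\beta < 1/2$ it follows the same route as the paper: substitute the $R_1$ approximation from \eqref{decomposition of C(s,t)}, compute the inner $\xi$-integral as the power law $\frac{1}{1-2\beta}\bigl(\frac{1-t}{1-s}\bigr)^{1-2\beta}$, and bound the remaining $s$-integral using $1-s \in [1/2,1]$. Where you diverge is the endpoint $\beta = 1/2$: the paper handles it with a separate, somewhat involved logarithmic computation (a change of variables $s \mapsto 1-\frac{t}{x - t/(1-t)}$ and crude bounds on the resulting $\log$), whereas you observe that the approximation $|\xi|^{-2\beta}$ is simply not a faithful upper bound near $\xi = 0$, and replace it with the trivial pointwise bound $I(\xi,s,t) \le 1$ coming from the fact that each factor $(1+\alpha|\xi|)^{-\beta}$ is at most $1$. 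That is a cleaner and more honest treatment of the singularity. In fact, your observation overshoots the goal: since $R_1 \subset [0,2]$ has length at most $2$ and $I \le 1$ everywhere, one immediately has $\int_0^{1/2}\int_{R_1} I\,d\xi\,ds \le 1$ for every $\beta \in (1/4,1/2]$, which makes the entire lemma a one-liner and renders both the paper's case split and your own $\beta < 1/2$ computation unnecessary. (The power-law computation does carry the benefit of an explicit constant depending on $t$ and $\beta$, but the lemma only asserts finiteness.) Worth knowing too: the paper's displayed inner-integral in the $\beta \ne 1/2$ case writes $\bigl(\frac{t}{1-s}\bigr)^{1-2\beta}$ where $\bigl(\frac{1-t}{1-s}\bigr)^{1-2\beta}$ must be meant — your version has it right.
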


\begin{proof}
We prove the finiteness of this stated integral in two cases: when $\beta\not=1/2$ and when $\beta=1/2$. 

When $\beta\not=1/2$, we begin by applying \eqref{main fact} and use calculus to obtain
\begin{align*}
\int\limits_0^\frac{1}{2} \int\limits_{R_1} I(\xi,s,t) d\xi ds & \approx \int\limits_0^{1/2} \int\limits_{R_1} |\xi|^{-2\beta}\left( \frac{1-t}{t} \right)^{-\beta}  d\xi ds \\
& = \left( \frac{t}{1-t} \right)^{\beta}  \left( \frac{1}{1-2\beta} \right)  \int\limits_0^\frac{1}{2} \left(\frac{t}{1-s}\right)^{1-2\beta} ds .
\end{align*}
Next, we use the coarse approximation that $\frac{t}{1-s}\le 2$ and is a continuous function for $s\in [0,1/2]$, leading to 
\[
\left( \frac{t}{1-t} \right)^{\beta}  \left( \frac{1}{1-2\beta} \right)  \int\limits_0^\frac{1}{2} \left(\frac{t}{1-s}\right)^{1-2\beta} ds 
\le \frac{t^{1-\beta}}{(1-t)^\beta (1-2\beta)} 2^{-2\beta}.
\]

When $\beta = 1/2$, the computation becomes slightly different, and we obtain
\begin{align}\label{R2 case for integrability}
\int\limits_0^\frac12 \int\limits_{R_1} I(\xi) d\xi ds &\le \int\limits_0^\frac12 \int\limits_0^{\frac{t}{1-s}} \left( \frac{1}{1+|\xi|}   \right)^{\frac{1}{2}} \left(  \frac{ 1}{ 1+ \frac{1-t}{t}|\xi |}\right)^{\frac{1}{2}} d\xi ds \notag \\
&\le \left( \frac{t}{1-t}  \right)^{\frac{1}{2}} \int\limits_0^\frac12 \int\limits_0^{\frac{t}{1-s}} \frac{d\xi }{\frac{t}{1-t} + |\xi|} ds \notag  \\
&= \left( \frac{t}{1-t}  \right)^{\frac{1}{2}}\left[ \int\limits_0^\frac12 \log\left(\frac{t}{1-t} +\frac{t}{1-s} \right) ds+ \frac{1}{2}\log \frac{1-t}{t}\right].
\end{align}
Now, we change variables $s\mapsto 1-  \frac{t}{x-\frac{t}{1-t}}$ and obtain
\begin{equation}\label{change of variables for integrability}
\eqref{R2 case for integrability}= \left( \frac{t}{1-t}  \right)^{\frac{1}{2}}
\int\limits_{\frac{t}{1-t} +t}^{\frac{t}{1-t} + 2t} \left(\frac{t}{x-\frac{t}{1-t}}\right)^2 \log x dx 
 + \frac{1}{2}  \left( \frac{t}{1-t}  \right)^{\frac{1}{2}} \log \frac{1-t}{t} .
\end{equation}
To conclude, observe for $x\in \left[ t/(1-t) + t , t/(1-t) +2t  \right]$, $x-t/(1-t) \ge t$ hence
\begin{equation}\label{crude bounds for integrability}
\left(\frac{t}{x-\frac{t}{1-t}} \right)^2\le 1 \quad 
\text{ and } \quad 
\log x \le \log \left(\frac{t}{1-t} +2t\right).
\end{equation}
Plugging \eqref{crude bounds for integrability} into \eqref{change of variables for integrability} provides
\[
\eqref{R2 case for integrability} \le \left( \frac{t}{1-t}  \right)^{\frac{1}{2}} t
\log \left(\frac{t}{1-t} +2t\right)
+ \frac{1}{2}  \left( \frac{t}{1-t}  \right)^{\frac{1}{2}} \log \frac{1-t}{t} .
\]
\end{proof}


\begin{lemma}\label{lemma: R2 summand}
From the expression \eqref{expression: summand of Rs}, the summand involving $R_2$, displayed below
\[
\int\limits_0^{1/2} \int\limits_{R_2} I(\xi,s,t)d\xi ds
\]
is finite. 
Recall that $t\in(0,1/2)$ is fixed, $R_2 = [(1-t)/(1-s), (1-t)/s )$ and that $\beta\in (1/4,1/2]$. 
\end{lemma}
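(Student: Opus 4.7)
The plan is to follow the same two-step scheme used in Lemma \ref{lemma: R1 summand}: perform the inner $\xi$-integration explicitly on $R_2$, then bound the resulting integrand in $s$. Substituting the approximation \eqref{decomposition of C(s,t)} on $R_2$, the integrand becomes $I(\xi,s,t)\approx |\xi|^{-3\beta}(\tfrac{t}{1-t})^{\beta}(\tfrac{t}{1-s})^{\beta}$, and the $\xi$-part is elementary:
\[
\int_{(1-t)/(1-s)}^{(1-t)/s} |\xi|^{-3\beta}\,d\xi=
\begin{cases}
\dfrac{(1-t)^{1-3\beta}}{1-3\beta}\bigl[s^{3\beta-1}-(1-s)^{3\beta-1}\bigr] & \text{if } 3\beta\ne 1,\\[1mm]
\log\bigl(\tfrac{1-s}{s}\bigr) & \text{if } 3\beta=1.
\end{cases}
\]
The critical exponent here is $\beta=1/3$, playing the role that $\beta=1/2$ played in Lemma \ref{lemma: R1 summand}. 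Since $t$ is fixed in $(0,1/2)$, every $t$-power $t^\alpha(1-t)^\gamma$ produced along the way is a finite constant and can be pulled out of the $s$-integral.

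Next I would bound the remaining integral in $s$ over $[0,1/2]$. The factor $(1-s)^{-\beta}$ is bounded on this interval, so the argument reduces to showing that the above $\xi$-integral is $L^{1}$ in $s$. In the logarithmic case $3\beta=1$ this is the classical estimate $\int_0^{1/2}\log((1-s)/s)\,ds<\infty$, handled exactly as in the $\beta=1/2$ case of Lemma \ref{lemma: R1 summand}. In the polynomial case I must bound $\int_0^{1/2}|s^{3\beta-1}-(1-s)^{3\beta-1}|\,ds$: for $\beta\ge 1/3$ the exponent $3\beta-1\ge 0$ keeps the integrand uniformly bounded by $1$, while for $1/4<\beta<1/3$ the only singular term is $s^{3\beta-1}$ near $s=0$, and this is integrable since $3\beta-1>-1$, giving $\int_0^{1/2}s^{3\beta-1}\,ds=(1/2)^{3\beta}/(3\beta)<\infty$. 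Combining the pieces yields an explicit upper bound of the form $C(\beta)\,t^{2\beta}(1-t)^{1-4\beta}$ in the generic case (and $C\,t^{2\beta}(1-t)^{-\beta}$ in the borderline $\beta=1/3$ case), both finite for $t\in(0,1/2)$.

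The main obstacle I expect is purely bookkeeping: one must track how the prefactors $(\tfrac{t}{1-t})^\beta$ and $(\tfrac{t}{1-s})^\beta$ interact with the $(1-t)^{1-3\beta}$ that comes out of the inner integration, and verify that no hidden singularity in $s$ appears through the $(1-s)^{-\beta}$ factor (it does not, since it stays bounded on $[0,1/2]$). Once the exponents are organized, the only genuinely delicate point is the integrability of $s^{3\beta-1}$ at $s=0$ in the regime $\beta<1/3$, which requires nothing beyond $3\beta>0$ and is therefore comfortably guaranteed by the hypothesis $\beta\in(1/4,1/2]$.
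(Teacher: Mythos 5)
Your proposal is correct and follows essentially the same route as the paper's proof: split at the critical exponent $3\beta=1$ into the three cases $\beta<1/3$, $\beta=1/3$, $\beta>1/3$, evaluate the inner $\xi$-integral explicitly in each case, then bound the remaining $s$-integral over $[0,1/2]$ using that $(1-s)^{-\beta}$ is bounded there and that $s^{3\beta-1}$ is integrable at $s=0$. The only cosmetic difference is that in the borderline case $\beta=1/3$ the paper substitutes $s\mapsto(x+1)^{-1}$ to evaluate $\int_0^{1/2}\log\!\big(\tfrac{1-s}{s}\big)\,ds$ explicitly, whereas you simply invoke its finiteness directly; both are fine.
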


\begin{proof}
After applying \eqref{main fact} to the above integral, three cases must be considered: $\beta \in [1/4,1/3)$, $\beta =1/3$ and $\beta \in (1/3,1/2]$. 
In each of these subcases, the inner-most integral behaves differently, so we show these estimates first and then evaluate the integral with respect to $s$. 

Considering the integral in $\xi$, we have the following approximations from calculus
\begin{equation}\label{expression: dealing with the beta =1/3 singularity}
\int\limits_{(1-t)/(1-s)}^{(1-t)/s} I(\xi ,s,t,) d\xi \le \frac{t^{2\beta}}{(1-t)^\beta} \times 
\begin{cases}
\frac{1}{1-3\beta} \left[ (\frac{1-t}{s})^{1-3\beta} - (\frac{1-t}{1-s})^{1-3\beta } \right]  & 1/4 < \beta < 1/3 \\
\log \left[\frac{1-s}{s}  \right] & \beta = 1/3 \\
\frac{1}{3\beta - 1 } \left[ (\frac{1-s}{1-t})^{3\beta-1} - (\frac{s}{1-t})^{3\beta -1} \right]  & 1/3 < \beta < 1/2 
\end{cases}.
\end{equation}

Now, we sequentially evaluate the integral over $s$ for the differing cases.
When $1/4< \beta<1/3$ and using the estimate for this case given in \eqref{expression: dealing with the beta =1/3 singularity}, the integral in question becomes
\[
\int\limits_0^{1/2} \int\limits_{R_2} I(\xi ,s,t)d\xi ds \le \frac{t^{2\beta}}{(1-3\beta)(1-t)^\beta} \int\limits_0^{1/2} (1-s)^{-\beta} \left[ \left(\frac{1-t}{s}\right)^{1-3\beta} - \left(\frac{1-t}{1-s}\right)^{1-3\beta } \right] ds.
\]
For $s\in (0,1/2)$, we estimate $(1-s)^{-\beta} \le 2^\beta$ so that we can estimate as follows
\[
\int\limits_0^{1/2} \int\limits_{R_2} I(\xi ,s,t)d\xi ds \le \frac{2^{\beta}t^{2\beta}}{(1-3\beta)(1-t)^\beta} \int\limits_0^{1/2} \left[ \left(\frac{1-t}{s}\right)^{1-3\beta} - \left(\frac{1-t}{1-s}\right)^{1-3\beta } \right] ds.
\]
Using calculus, one can evaluate the above integral to obtain
\[
\int\limits_0^{1/2} \int\limits_{R_2} I(\xi ,s,t)d\xi ds  \le 
\frac{2^{\beta}t^{2\beta}(1-t )^{1-3\beta}}{(1-3\beta)(1-t)^\beta 3\beta}< \infty. 
\]

When dealing with the case $\beta =1/3$, applying \eqref{expression: dealing with the beta =1/3 singularity} results in
\[
\int\limits_0^{1/2} \int\limits_{R_2} I(\xi ,s,t)d\xi ds \le \frac{t^{2\beta}}{(1-t)^\beta} \int\limits_0^{1/2} \log \left(\frac{1-s}{s}\right)ds .
\]
We apply two estimates to evaluate this integral: $s\in (0,1/2)$ yields $(1-s)^{-\beta } \le 2^\beta$, and the change of variables $s\mapsto (x+1)^{-1}$ results in 
\[
\int\limits_0^{1/2} \int\limits_{R_2}I(\xi ,s,t) d\xi \le \frac{t^{2\beta}}{(1-t)^\beta}  \int\limits_1^\infty 2^\beta \frac{\log x }{(1+x^2)}dx  \le  \frac{t^{2\beta } 2^\beta \log 2}{(1-t)^\beta } < \infty .
\]

The final case over $1/3<\beta<1/2$ is very similar to the case over $1/4<\beta<1/3$, so we use similar techniques. Again, \eqref{expression: dealing with the beta =1/3 singularity} gives us the expression 
\[
\int\limits_0^{1/2} \int\limits_{R_2} I(\xi,s,t) d\xi ds \le \frac{t^{2\beta}}{(3\beta-1)(1-t)^\beta} \int\limits_0^{1/2} (1-s)^{-\beta} \left[ \left(\frac{1-s}{1-t}\right)^{3\beta-1} - \left(\frac{s}{1-t} \right)^{3\beta-1 } \right] ds.
\]
This computation again uses for $s\in(0,1/2)$, $(1-s)^{-\beta }\le 2^\beta $ and then proceeds with calculus to obtain
\[
\int\limits_0^{1/2} \int\limits_{R_2} I(\xi,s,t) d\xi ds \le \frac{t^{2\beta}2^\beta }{3\beta (3\beta-1)(1-t)^\beta }\left[ 1- 2^{1- 3\beta} \right]<\infty  . 
\]

\end{proof}


\begin{lemma}\label{lemma: R3 summand}
From the expression \eqref{expression: summand of Rs}, the summand involving $R_3$, displayed below
\[
\int\limits_0^{1/2} \int\limits_{R_3} I(\xi,s,t)d\xi ds
\]
is finite. 
Recall that $t\in(0,1/2)$ is fixed, $R_3 = [(1-t)/s , \infty  )$ and that $\beta\in (1/4,1/2]$. 
\end{lemma}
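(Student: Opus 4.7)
The strategy mirrors the proofs of Lemmas \ref{lemma: R1 summand} and \ref{lemma: R2 summand}: first integrate in $\xi$ over $R_3$ using the approximation \eqref{decomposition of C(s,t)}, and then bound the resulting expression in $s$ over $(0,1/2)$. The key simplification on $R_3$, compared with $R_2$, is that the $\xi$-integrand decays like $|\xi|^{-4\beta}$ on the half-line $[(1-t)/s, \infty)$; since $\beta > 1/4$ forces $4\beta > 1$, the improper $\xi$-integral converges at infinity for every $\beta$ in the stated range, so no subcase split in $\beta$ is required (unlike in $R_2$).

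Carrying this out, I would pull the $s,t$-dependent prefactor of $I(\xi,s,t)$ on $R_3$ out of the $\xi$-integral and evaluate
\[
\int\limits_{(1-t)/s}^{\infty} |\xi|^{-4\beta}\, d\xi \;=\; \frac{1}{4\beta - 1}\left(\frac{1-t}{s}\right)^{1-4\beta} \;=\; \frac{1}{4\beta-1}\left(\frac{s}{1-t}\right)^{4\beta-1}.
\]
Multiplying by the prefactor $\left(\tfrac{1-t}{t}\right)^{-\beta}\left(\tfrac{1-s}{t}\right)^{-\beta}\left(\tfrac{s}{t}\right)^{-\beta}$ from \eqref{decomposition of C(s,t)} and collecting powers, the inner integral becomes bounded by a constant depending only on $t$ and $\beta$ times the $s$-dependent factor $s^{3\beta-1}(1-s)^{-\beta}$. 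Explicitly, the $t$-dependent constant is of the form $t^{3\beta}/[(4\beta-1)(1-t)^{5\beta-1}]$, which is finite for any fixed $t \in (0, 1/2)$ and $\beta \in (1/4, 1/2]$.

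It then remains to estimate $\int_0^{1/2} s^{3\beta - 1}(1-s)^{-\beta}\, ds$. Since $(1-s)^{-\beta} \le 2^{\beta}$ on $[0, 1/2]$, this integral is dominated by $2^{\beta}\int_0^{1/2} s^{3\beta - 1}\, ds$, which is finite because $3\beta > 0$. Combining this with the $t$-dependent constant above yields the desired finiteness of the double integral over $R_3$.

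The only obstacle is the routine bookkeeping of powers of $t$, $1-t$, $s$, and $1-s$ after substitution. The lower bound $\beta > 1/4$ is used precisely to guarantee convergence of the $\xi$-integral on the half-line $R_3$; the subsequent $s$-integration is then automatic (the resulting exponent $3\beta-1$ is strictly greater than $-1$), and the upper bound $\beta \le 1/2$ plays no role in this region, in contrast with the more delicate analysis needed on $R_2$.
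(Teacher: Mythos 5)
Your proof is correct and follows essentially the same route as the paper: pull out the $(s,t)$-dependent prefactor from \eqref{decomposition of C(s,t)}, integrate the $|\xi|^{-4\beta}$ tail on $R_3$ using $4\beta>1$ to get $\frac{1}{4\beta-1}\left(\frac{s}{1-t}\right)^{4\beta-1}$, collect the $s$-powers into $s^{3\beta-1}(1-s)^{-\beta}$, bound $(1-s)^{-\beta}\le 2^\beta$ on $(0,1/2)$, and integrate using $3\beta-1>-1$. (The paper's displayed constant writes $1+4\beta$ and $2^{3\beta}$ where your $4\beta-1$ and $2^{2\beta}$ are the correct values; these appear to be typographical slips in the paper and do not affect the conclusion.)
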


\begin{proof}
Applying \eqref{main fact} to the integral in question one yields
\[
\int\limits_0^{1/2} \int\limits_{R_3} I(\xi,s,t) d\xi ds \le \frac{t^{3\beta}}{(1-t)^\beta } \int\limits_0^{1/2} \frac{1}{s^\beta (1-s)^\beta}\int\limits_{R_3} |\xi|^{-4\beta } d\xi ds. 
\]
Since $\beta >1/4$, we can use calculus to evaluate the integral in $\xi$,
\[
\frac{t^{3\beta}}{(1-t)^\beta } \int\limits_0^{1/2} \frac{1}{s^\beta (1-s)^\beta}\int\limits_{R_3} |\xi|^{-4\beta } d\xi ds \le 
\frac{ (1-t)^{1-5\beta} t^{3\beta}}{1+4\beta  } \int\limits_0^{1/2} \frac{s^{3\beta -1 }}{(1-s)^\beta}  ds . 
\]
By applying the bound $(1-s)^{-\beta} \le 2^\beta$ for $s\in(0,1/2)$, we can compute the integral in $s$ using calculus to yield
\[
\int\limits_0^{1/2} \int\limits_{R_3} I(\xi,s,t) d\xi ds \le \frac{t^{3\beta }}{3\beta 2^{3\beta} (1+4\beta ) (1-t)^{5\beta-1  }}< \infty. 
\]
\end{proof}

\section{Avoiding Set} 

In this section, we focus on the construction of a set that avoids nontrivial solutions to a given equation as in \eqref{4variable equation}.
To do this, we follow a construction in \cite{pablo} which, for completeness, we recall the important components in the following subsection. 
Briefly, the first step partitions $[0,1]$ into $N_1$ equal length intervals. 
Then, let $Q_{N_1}$ be an avoiding set in the integers, whose existence is established in \cite{ruzsa1}, and pick the corresponding subintervals of $[0,1]$.
That is, the union of intervals of the form 
\[
\left[\frac{q}{N_1}, \frac{q+1}{N_1}\right] \text{ for all }q\in Q_{N_1}. 
\]
Then, according to some random shift $l_1$, we keep the shifted intervals associated with $u=q+l_1\mod N_1$ for all $q\in Q_{N_1}$, i.e., the union of all such intervals of the form $[\frac{u}{N_1}, \frac{u+1}{N_1}]$. 
At the $n$th stage of construction, we follow a similar algorithm. 
Each of the basic Cantor intervals given by the $n-1$ stage of construction are partitioned into $N_{n}$ many equal-length pieces. 
For $N_{n}$ take an avoiding set $Q_{N_n}$ that initially dictates the children of each basic Cantor interval.
However, for \textit{each} of the basic Cantor intervals at the $n-1$ stage, there is an associated shift for its children that survived construction. 
That is, the children of each basic Cantor interval do not necessarily have the same orientation. 
These random shifts ensures that our set has our desired Fourier decay. 

\subsection{Notation \& Construction}

Let $\{L_n \},\{M_n\}\subset \N$ be fixed sequences of positive integers with $1\le L_n\le M_n$ and $M_n\ge 2$ for all $n\in \N$. Set 
\[
\Sigma_n := \{ \mathbf{j}:= (j_1,\dots, j_n): j_k\in M_k \text{ for each } k\in \{1,\dots,n\}   \} \text{ and } \Sigma^* = \bigcup_{n=0}^\infty \Sigma_n.
\]
For each $M_n$ and multi-index $\mathbf{j}=(j_1,\dots ,j_n)\in \Sigma_n$ there is an associated interval in $[0,1]$ as follows,
\[
I_{M_n, \mathbf{j}} := \sum_{k=1}^n \frac{j_k}{M_1 \dots M_k} + \left[0,\frac{1}{M_1\cdots M_n}\right].
\]
The Cantor iterates, $E_n$, to be specified consists of a finite union of the intervals $I_\mathbf{j}$ for $\mathbf{j}\in \Sigma_n$; thus,
\[
E_1 \supsetneq E_2 \supsetneq \cdots \supsetneq E_n \supsetneq E_{n+1} \supsetneq \cdots \supsetneq E, \quad E = \bigcap_{n=1}^\infty E_n. 
\]

To specify $E_n$, we introduce our random pieces.  
Let $\{X_\mathbf{j}: \mathbf{j}\in \Sigma^*\}$ be a family of independent random sets that obey the following properties:
\begin{enumerate}\label{properties for pablos construction}
    \item For each $n\ge 0$ and $\mathbf{j}\in \Sigma_n$, the set $X_\mathbf{j}$ is a subset of $[M_{n+1}]$ with $\#(X_j)=L_{n+1}$, almost surely. 

    \item For each $a\in [M_{n+1}]$ and $\mathbf{j}\in \Sigma_n$, $\mathbb{P}(a\in X_\mathbf{j})=\frac{L_{n+1}}{M_{n+1}}$. 
\end{enumerate}
Each sequence of random sets $\{X_\mathbf{j}: \mathbf{j}\in \Sigma^*\}$ leads to a sequence of multi-indices $\{\mathcal{J}_n\}$. 
That is, for every $n\ge 1$ define,
\begin{equation}
\mathcal{J}_n := \{(j_1,\dots ,j_n)\in \Sigma_n : j_{k+1}\in X_{(j_1,\dots, j_k)} \text{ for all } k=0,\dots,n-1   \} \subset \Sigma_n . 
\end{equation}
With this collection of multi-indices, we may define the Cantor iterates $E_n$ as 
\[
E_n = \bigcup_{j\in \mathcal{J}_n} \text{ and hence } E=\bigcap_{n=1}^\infty E_n .
\]

Because we are interested not only in a set but also in the types of measures that are supported on it, we include the description of a measure such as that in \cite{pablo} which has ideal Fourier decay. 
From properties 1 and 2, we have  $|\mathcal{J}_n| = |L_1 \cdots L_n|$ almost surely and that for every fixed multi-index $\mathbf{j}\in \Sigma_n$,
\begin{equation}\label{definition of the probability for second property for pablos construction}
\mathbb{P} ((\mathbf{j},j_{n+1})\in \mathcal{J}_{n+1}) = \mathbf{1}_{\mathcal{J}_n}(\mathbf{j}) \frac{L_{n+1}}{M_{n+1}} 
\end{equation}
for all $j_{n+1} \in \{0,1,\dots , M_{n+1}-1\}$. 
On $E_n$, define the following probability density function 
\[
\mu_n(x) := \frac{M_1\cdots M_n}{L_1 \cdots L_n} \sum_{\mathbf{j}\in \mathcal{J}_n} \mathbf{1}_{I_{\mathbf{j}}}(x).
\]
By the Carath\'{e}odory extension theorem, there exists a probability measure $\mu$ supported on $E$ such that 
\[
\mu(I_\mathbf{j}) = \mu_n (I_\mathbf{j}) = \frac{1}{L_1 \cdots L_n} \text{ for all } \mathbf{j}\in \mathcal{J}_n.
\]
Specifically, $\mu_n\rightarrow \mu $ weakly. 
Even luckier for us, the authors of \cite{pablo} prove that such a construction above will lead to a measure $\mu$ that has maximal Fourier decay. 
This is the theorem below; for proof, we refer the reader to \cite{pablo}. 

\begin{theorem}[\cite{pablo}, Theorem 2.1]\label{pablo theorem for fourier decay}
Suppose that $\{M_n\}\subset \N$ satisfies 
\begin{equation}\label{first condition in pablos theorem}
\lim_{n\rightarrow \infty } \frac{\log M_{n+1}}{\log (M_1\cdots M_N)} =0.
\end{equation}
Fix $\sigma >0$ such that 
\begin{equation}\label{second sigma condition for pablo theorem}
\sigma < \liminf_{n\rightarrow \infty } \frac{\log (L_1\cdots L_n)}{\log (M_1\cdots M_n}.
\end{equation}
Then for $\mu$ as above there almost surely exists a constant $C>0$ such that 
\[
|\hat{\mu(k)}| \le C|k|^{-\sigma/2} \text{ for all } k\in \Z\setminus\{0\}. 
\]
\end{theorem}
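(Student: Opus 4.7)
The plan is to establish the decay bound first at the level of the approximating measures $\mu_n$ with high probability, then pass to $\mu$ via weak convergence together with a Borel–Cantelli argument. Setting $P_n := M_1 \cdots M_n$ and $\ell_n := L_1 \cdots L_n$, a direct computation using that $\mu_n$ is piecewise constant of height $P_n/\ell_n$ on the intervals $I_\mathbf{j}$ yields
\[
\hat\mu_n(k) = \psi(k/P_n) \cdot S_n(k), \qquad S_n(k) := \frac{1}{\ell_n} \sum_{\mathbf{j}\in\mathcal{J}_n} e^{-2\pi i k y_\mathbf{j}},
\]
where $\psi$ is a deterministic bounded factor (the Fourier transform of a unit-length interval, rescaled) and $y_\mathbf{j}$ is the left endpoint of $I_\mathbf{j}$. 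All the randomness sits in $S_n(k)$. Writing $y_\mathbf{j} = y_{\mathbf{j}'} + j_n/P_n$ with $\mathbf{j}' \in \Sigma_{n-1}$ exposes a recursive structure
\[
S_n(k) = \frac{1}{\ell_{n-1}} \sum_{\mathbf{j}'\in \mathcal{J}_{n-1}} e^{-2\pi i k y_{\mathbf{j}'}} T_{\mathbf{j}'}(k), \qquad T_{\mathbf{j}'}(k) := \frac{1}{L_n} \sum_{a \in X_{\mathbf{j}'}} e^{-2\pi i k a/P_n},
\]
in which, conditionally on $\mathcal{J}_{n-1}$, the variables $T_{\mathbf{j}'}(k)$ are independent, bounded by $1$ in modulus, and (by property $(2)$ of the construction) have conditional expectation controlled by an averaged Dirichlet kernel that vanishes for many relevant $k$.

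The next step is concentration. Applying Hoeffding's inequality to real and imaginary parts of $S_n(k)$ at each level of the hierarchy gives, after checking that the expectation contribution is negligible when $k \ne 0$, a tail bound of the form $\mathbb{P}(|S_n(k)| > \lambda) \lesssim \exp(-c\lambda^2 \ell_n)$. For a given nonzero integer $k$ I choose $n = n(k)$ with $P_n \le |k| < P_{n+1}$ and set $\lambda := |k|^{-\sigma/2}$. The hypothesis $\sigma < \liminf_n \log \ell_n / \log P_n$ guarantees $\ell_n \ge P_n^{\sigma + \eta}$ for some $\eta > 0$ eventually, so $\lambda^2 \ell_n \ge P_n^{\eta}$ and the tail decays doubly-exponentially in $\log P_n$. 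Union-bounding over all $k$ with $P_n \le |k| < P_{n+1}$ costs at most $2 P_{n+1}$ terms; the condition $\log M_{n+1}/\log P_n \to 0$ forces $\log P_{n+1}/\log P_n \to 1$, so this count is subexponential in $\log P_n$ and is comfortably absorbed by the Hoeffding exponent. Summing the resulting tails over $n$ yields a convergent series, and Borel–Cantelli supplies an almost-sure index $N$ with $|\hat\mu_n(k)| \le C|k|^{-\sigma/2}$ for every $|k| \ge P_N$ and every $n \ge n(k)$.

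To conclude, since every $\mu_n$ and $\mu$ is supported in $[0,1]$ and $\mu_n \to \mu$ weakly, $\hat\mu_n(k) \to \hat\mu(k)$ for each fixed integer $k$, which transfers the bound to $\hat\mu$. Finitely many small $|k|$ are handled by enlarging $C$. I expect the main obstacle to be the calibration in the concentration–union-bound step: one must extract precisely the exponent $\sigma/2$ (the Hoeffding square-root of the ``mass dimension'' $\sigma$) while ensuring that the number of bad events being summed — roughly the cardinality of $[P_n, P_{n+1})$ together with a factor over the levels $n$ — does not overwhelm the exponential decay. This is exactly where the hypothesis $\log M_{n+1}/\log P_n \to 0$ enters nontrivially, and where one must also track the conditional expectations of the $T_{\mathbf{j}'}(k)$ through the hierarchical decomposition without losing more than a constant factor per stage.
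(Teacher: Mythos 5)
The paper does not prove this result: it is quoted from Shmerkin \cite{pablo}, and the text explicitly defers — ``for proof, we refer the reader to \cite{pablo}.'' So there is no in-paper proof to compare against. That said, your outline is broadly in the spirit of the actual argument in \cite{pablo} and related work (factor $\hat\mu_n$ into a deterministic window $\psi(k/P_n)$ times a random exponential sum $S_n(k)$, exploit the conditional independence of the last-stage random sets, apply a Hoeffding-type concentration bound, union-bound over each frequency block $P_n\le|k|<P_{n+1}$, and close with Borel--Cantelli), and the calibration $\lambda=|k|^{-\sigma/2}$ against $\ell_n\gtrsim P_n^{\sigma+\eta}$ is exactly the correct bookkeeping that makes the union bound harmless under \eqref{first condition in pablos theorem}.

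Two places in your sketch need more than a gesture. First, you claim the conditional expectation of $S_n(k)$ is negligible for $k\ne 0$, but in fact
\[
\mathbb{E}\bigl[S_n(k)\,\big|\,\mathcal{J}_{n-1}\bigr]
= S_{n-1}(k)\cdot \frac{1}{M_n}\sum_{a=0}^{M_n-1} e^{-2\pi i k a/P_n},
\]
and the Dirichlet-type factor has modulus $\le 1$ but is \emph{not} small in general (it equals $1$ whenever $P_n\mid k$). The correct structure is a martingale-increment estimate $|S_n(k)|\le |S_{n-1}(k)| + (\text{stage-}n\text{ fluctuation})$, with the fluctuations allocated a geometrically decaying budget and Hoeffding applied to each increment conditionally. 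Second, your Borel--Cantelli conclusion asserts the bound for \emph{all} $n\ge n(k)$, but the union bound as written is only over $k$ within a single frequency block at $n=n(k)$; to get uniformity in $n$ (needed to pass to $\hat\mu(k)=\lim_n\hat\mu_n(k)$) you must either union-bound over all pairs $(n,k)$ with $n\ge n(k)$ — which the geometric allocation of fluctuation budgets makes summable — or show explicitly that the bound at stage $n(k)$ propagates forward. As written the sketch silently assumes this persistence.
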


\subsection{Transference Lemma}\label{subsection: transference}
This section is dedicated to proving the following Lemma, which is then utilized to prove Theorem \ref{E For avoidance}.

\begin{lemma}\label{partial avoidance in the real line}
Let $N$ be a large positive integer. There exists a constant $\alpha$ independent of $N$ and a set $U\subset [N]$ such that 
\begin{itemize}
    \item $\#(U) \gtrsim \sqrt{N}e^{-\alpha \sqrt{\log N}}$
    \item For any $l\in \N$, if there exists $\{x,y,u,v\}\subset \mathbb{I}(U+l)$ such that $f(x,y,u,v)=0$, then there exists a $j'\equiv j+l\mod N$ such that $\{x,y,u,v\}\subset \frac{j'}{N}+[0,\frac{1}{N}]$.
\end{itemize}
\end{lemma}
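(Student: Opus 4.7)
The plan is to transfer a Ruzsa-type solution-avoiding set from the integers to the real line via a dilation. Let $S = a+b = c+d$ and fix a constant $K > S$ depending only on the coefficients (after a harmless reduction we may assume $K \mid N$). Invoking the construction of \cite{ruzsa1}, we obtain
\[
Q \subset \{0, 1, \dots, \lfloor N/(KS^2)\rfloor\}
\]
with $\#Q \gtrsim \sqrt{N/(KS^2)}\,e^{-c\sqrt{\log N}} \gtrsim \sqrt{N}\,e^{-\alpha\sqrt{\log N}}$, containing no nontrivial integer solutions to the translation-invariant equation $aq_1 + bq_2 = cq_3 + dq_4$. Set $U := KQ \subset [N]$; this meets the cardinality requirement, with the powers of $S$ and $K$ absorbed into the constant $\alpha$.

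Fix $l \in \N$ and suppose $\{x,y,u,v\} \subset \mathbb{I}(U+l)$ satisfies $f(x,y,u,v)=0$. Writing $x = (p_x + \epsilon_x)/N$ with $p_x \in U+l \pmod{N}$ and $\epsilon_x \in [0,1]$ (and analogously for $y, u, v$), the equation rearranges to
\[
a p_x + b p_y - c p_u - d p_v = -(a\epsilon_x + b\epsilon_y - c\epsilon_u - d\epsilon_v),
\]
whose left side is an integer and right side lies in $[-S,S]$. Decomposing $p_i = Kq_i + l - m_i N$ for $q_i \in Q$ and $m_i \in \{0,1\}$, and using $a+b=c+d$ to kill the contribution of $l$, this becomes
\[
K(a q_x + b q_y - c q_u - d q_v) - MN \in \Z \cap [-S,S],
\]
where $M := am_x + bm_y - cm_u - dm_v$.

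Because $K \mid N$ the left side is divisible by $K$, and since $K > S$ this integer must vanish. Hence $aq_x + bq_y - cq_u - dq_v = MN/K$. The range restriction $q_i < N/(KS^2)$ bounds the left side by $N/(KS)$ in absolute value, forcing $|M| \leq 1/S < 1$ and thus $M = 0$. Therefore $(q_x, q_y, q_u, q_v) \in Q^4$ is a solution to the translation-invariant equation $aq_1 + bq_2 = cq_3 + dq_4$ and so is trivial by construction; in the genus one case this forces $q_x = q_y = q_u = q_v$, giving $p_x = p_y = p_u = p_v =: j'$ and placing all four points in the common subinterval $[j'/N, (j'+1)/N]$, as claimed. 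The main obstacle is calibrating the parameter $K$ and the range of $Q$ so as to simultaneously force $M=0$ via the range bound and preserve the target cardinality under the dilation; a secondary subtlety arises in the genus two case, where the trivial integer solutions only give $\{q_x, q_y\} = \{q_u, q_v\}$ rather than equality of all four, and one must either supplement the argument with a within-interval step or select $Q$ from a refined Ruzsa/Sidon construction that additionally suppresses the paired structure.
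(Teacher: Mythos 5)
Your proof follows the same route as the paper's: transfer a Ruzsa solution-avoiding integer set $Q$ to the real line by a fixed-constant dilation, then analyze a hypothetical continuum solution by reducing mod $N$ and showing the discrete projections $q_i$ must themselves solve the integer equation. The paper dilates by $S+1$ and eliminates the error term via a lower bound of $S+1$ on the integer contribution played against the upper bound of $S$ on the fractional parts, plus an upper bound on the $(S+1)(aj_x+bj_y-cj_u-dj_v)$ term coming from the range of $V$; you dilate by $K \mid N$ with $K > S$, and instead use divisibility of the whole expression by $K$ to force the bracketed quantity to vanish, then a range bound to force $M=0$. The divisibility trick is a bit cleaner and sidesteps the somewhat delicate constant-chasing in the paper (whose choice $M = \lceil N/(S^2+abcd)\rceil$ does not obviously make $S(S+1)M + S < N$ when $abcd$ is small, e.g. $a=b=c=d=1$, though this is fixable by shrinking $M$).

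Your remark on the genus two case flags a real issue which the paper's proof does not address. For a genus two equation such as $x+y=u+v$, the Ruzsa set $Q$ only guarantees that integer solutions have $\{q_x,q_y\} = \{q_u,q_v\}$, not that all four $q_i$ coincide; so a continuum solution can project to $q_x=q_u \neq q_y=q_v$, and the lemma's stated conclusion (all four points in a single subinterval of length $1/N$) is then false. The paper's argument leaps from $aj_x+bj_y=cj_u+dj_v$ to $j_x=j_y=j_u=j_v$ without accounting for this, and the downstream proof of Theorem~\ref{E For avoidance}, which assumes ``not all identical'' and derives a contradiction, consequently also conflates ``not all identical'' with ``nontrivial'' in the genus two case. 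Your proposed repair — either supplement with a second within-interval step on each of the two paired subintervals, or weaken the lemma's conclusion to a pairing statement — is the correct direction; as written, both your proof and the paper's are incomplete at this one point in genus two, while both are fine in genus one.
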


\begin{proof}
We show each condition separately. 
The first follows by a construction in \cite{ruzsa1} while the second follows from some bounds we establish. 
Let $N$ be such that $N\gg a+b$ and $S=a+b=c+d=$ for $\{a,b,c,d\}$ as the coefficients of the equation in \eqref{4variable equation}. 
We also rewrite such an expression as 
\[
ax+by=cu+dv \iff f(x,y,u,v) = ax+by-cu-dv
\]
since the right side would be more convenient notation occasionally in this proof. 

To show the first condition, set $M=\lceil N/(S^2 + abcd)\rceil>4$. 
A construction in \cite{ruzsa1} guarantees the existence of a set $V\subset [M]$ such that if $\{x,y,u,v\}\subset V$ and satisfies \eqref{4variable equation}, then it must be the case that $x=y=u=v$.
Set $U:= (S+1)V= \{(S+1)v : v\in V \}$ so that the bound shown in \cite{ruzsa1} on the size of $U$ obeys $\#(U) = \#(V) \gtrsim \sqrt{N}e^{-\alpha \sqrt{\log N}}$, as desired.

For the second condition, let $\{x,y,u,v\}\subset \mathbb{I}(U+l)$ for some $l\in \N$. 
By the construction of the set $\mathbb{I}(U+l)$, each $x,y,u,v$ must have a real number expression as follows
\begin{equation}\label{real number form }
x = \frac{(S+1)j_x +l +\delta_x}{N} - \chi_x \text{ where } \chi_x = \begin{cases}
    0 &  (S+1)j_x+l<N \\
    1 & (S+1)j_x+1\ge N 
\end{cases}
\end{equation}
where $\delta_x\in [0,1)$, $(S+1)j_x+l\in U+l\mod N$ and $j_x\in V$; similarly for $y,u,v$. 
Plugging the form \eqref{real number form }, into equation $f$ we obtain
\begin{equation}\label{straight plugging in}
(S+1)(aj_x +bj_y - cj_u - dj_v) + a \delta_x +b\delta _y -c\delta_u -d\delta_v - N(a\chi_x + b\chi_y -c\chi_u - d \chi_v)=0. 
\end{equation}
We claim that \eqref{straight plugging in} leads to $aj_x+bj_y=cj_u+dj_v $, by which the construction in \cite{ruzsa1} requires $j_x=j_u=j_u=j_v$. 
Hence, we will necessarily have that $\{x,y,u,v\}\subset \frac{j'}{N} + [0,\frac{1}{N}]$ and that would conclude the proof. 

To show the claim, we proceed by contradiction and assume that $aj_x+bj_y\not=cj_u+dj_v $.
Observe this first bound
\begin{equation}\label{trivial lower bound}
(S+1)|aj_x+bj_y - cj_u - dj_v| \ge (S+1)|(aj_x +b j_y)-(cj_u+dj_v)| \ge S+1
\end{equation}
and this second bound
\begin{equation}\label{triangle inequality for deltas}
|a\delta_x + b\delta_y -c\delta_u-d \delta_v| \le \max\{a\delta_x +b\delta_y, c\delta_u + d\delta_v\} \le S.
\end{equation}
Together, these bounds dictate that
\[
a\chi_x + b\chi_y -c\chi_u -d\chi_v\not=0.
\]
Therefore, we must have $N|a\chi_x +b\chi_y -c\chi_u-d\chi_v|\ge N$.
Furthermore, the triangle inequality and \eqref{triangle inequality for deltas} provide
\[
|S(aj_x +bj_y - cj_u - dj_v) + a \delta_x +b\delta _y -c\delta_u -d\delta_v | \le S\max\{aj_x +bj_y, cj_u +dj_v\} + S \le  \frac{N}{2S} + S.
\]
However, the above and $N|a\chi_x +b\chi_y -c\chi_u-d\chi_v|\ge N$ contradict \eqref{straight plugging in} for large enough $N$; thus, \eqref{straight plugging in} implies that $aj_x+bj_y=cj_u+dj_v$.
\end{proof}

\subsection{Proof of Theorem \ref{E For avoidance}}
\begin{proof}
We proceed in two steps. The first step is to specify the necessary sequences and random variables so that $E$ satisfies theorem \ref{pablo theorem for fourier decay} and has positive Fourier dimension. 
Second, that these choices ensure that $E$ avoids nontrivial solutions to $f$. 

For every $n\in \N$ set
\[
M_n = 4 + n, \ U_{M_n}\subset[M_n] \text{ and }   L_n = \#(U_{M_n})
\] 
where $U_{M_n}$ is a set that exists via Lemma \ref{partial avoidance in the real line}. 

That these choices satisfy Theorem \ref{pablo theorem for fourier decay}, we check that both \eqref{first condition in pablos theorem} and \eqref{second sigma condition for pablo theorem}. 
The first follows by 
\[
\lim_{n\rightarrow \infty } \frac{\log M_{n+1}}{\log(M_1\cdots M_n)} = \lim_{n\rightarrow \infty } \frac{\log(n+5)}{\log (n!) - \log 5!} = 0.
\]
To observe \eqref{second sigma condition for pablo theorem} is possible for $\sigma = \frac{1}{2}-\epsilon$ for all $\epsilon>0$ observe the limit as follows
\[
\lim_{n\rightarrow \infty } \frac{\log (L_1 \cdots L_n)}{\log(M_1 \cdots M_n)} \lesssim \lim_{n\rightarrow \infty } \frac{\log \sqrt{n!}}{\log n!} =\frac12.
\]
Therefore, as long as we can supply a way to specify the random variables $X_\mathbf{j}$, we will construct a set $E$ that supports a measure with Fourier decay. 

For any $\mathbf{j}\in \Sigma^*$, let $\{l_\mathbf{j}\}$ be a family of independent random variables distributed uniformly among $[M_{n+1}]$ when any $\mathbf{j}\in \Sigma_n$. 
Using these $l_\mathbf{j}$, we set
\[
X_\mathbf{j} = U_{M_{n+1}} +l_\mathbf{j} \mod M_{n+1} 
\]
for any $\mathbf{j}$ so that $\{X_\mathbf{j}\}$ is a collection of independent random variables.
Finally, we need to check that this collection obeys properties stated above \eqref{properties for pablos construction}. 
The first follows because $\#(X_\mathbf{j})=\#(U_{M_{n+1}} )= L_{n+1}$. 
The second follows by \eqref{definition of the probability for second property for pablos construction}.
Therefore, we conclude that a set $E$ obtained via the construction described in section 3.1 with the measure $\mu$ obeys the conditions of said theorem and hence has the stated Fourier dimension. 

To conclude, we only need to show that this construction avoids nontrivial solutions of $f$.
To this end, let $\{x,y,u,v\}\subset E$ such that $f(x,y,u,v)=0$. 
We also assume that not all $\{x,y,u,v\}$ are identical; hence, there exists a maximum index $n$ such that $\{x,y,u,v\}\subset I_\mathbf{j} \subset E_n$ for some $\mathbf{j} \in \Sigma_n$.
According to construction, $I_\mathbf{j}$ is partitioned into $M_{n+1}$ many pieces, and only the $U_{M_{n+1}}+l_\mathbf{j} \mod M_{n+1}$ pieces survive construction. 
Observe that 
\[
\{x,y,u,v\} \subset \alpha_\mathbf{j} + \bigcup_{j_{n+1} \in U_{M_{n+1}} + l_\mathbf{j} \mod M_{n+1}} \left[\frac{j_{n+1}}{M_1 \cdots M_{n+1}} , \frac{j_{n+1}+1}{M_1 \cdots M_{n+1}}  \right] 
\]
where $\alpha_\mathbf{j}$ is the leftmost endpoint of $I_\mathbf{j}$. 
Consider the following affine transformation $A$ that depends on $\mathbf{j}$ 
\begin{align*}
A:\mathbb{I}(U_{M_{n+1}} +l_\mathbf{j}) &\rightarrow \alpha_\mathbf{j} + \bigcup_{j_{n+1} \in U_{M_{n+1}} + l_\mathbf{j} \mod M_{n+1}} \left[\frac{j_{n+1}}{M_1 \cdots M_{n+1}} , 
\frac{j_{n+1}+1}{M_1 \cdots M_{n+1}}  \right],\\ 
 A:x &\mapsto \frac{x}{M_1\cdots M_n} + \alpha_\mathbf{j}.
\end{align*}
In particular, this means that there is an affine copy of $\{x,y,u,v\}$ inside $\mathbb{I}(U_{M_{n+1}} +l_\mathbf{j})$. 
By Lemma \ref{partial avoidance in the real line}, there must be an index $j_0 \in U_{M_{n+1}}$ so that this affine copy of $\{x,y,u,v\}$ lies in a single interval. 
Thus, $\{x,y,u,v\}\subset I_{\mathbf{j_{n+1}}}$ where $\mathbf{j}_{n+1} = (\mathbf{j}_n , j_0)$.
However, this contradicts the maximality of the index $n$ so that any such $\{x,y,u,v\}\not\subset E$. 
\end{proof}

\end{document}